\tikzset{negated/.style={
        decoration={markings,
            mark= at position 0.5 with {
                \node[transform shape] (tempnode) {$\backslash$};
            }
        },
        postaction={decorate}
    }
}
\newtheorem{theorem}{Theorem}[section]
\newtheorem{proposition}[theorem]{Proposition}
\newtheorem{lemma}[theorem]{Lemma}
\newtheorem{corollary}[theorem]{Corollary}
\newtheorem*{lemma*}{Lemma}
\newtheorem{alphatheorem}{Theorem}
\theoremstyle{definition}
\newtheorem{definition}[theorem]{Definition}
\newtheorem{remark}[theorem]{Remark}
\newcommand{\rank}{\operatorname{rank}}
\newcommand{\IP}{\mathrm{IP}}
\newcommand{\bra}[1]{\left( #1 \right)}
\renewcommand{\tilde}{\widetilde}
\renewcommand{\bar}{\overline}
\newcommand{\abs}[1]{\left|#1\right|}
\newcommand{\set}[2]{\left\{ #1 \ \middle| \ #2 \right\} }
\newcommand{\e}{\varepsilon}
\newcommand{\NN}{\mathbb{N}}
\newcommand{\QQ}{\mathbb{Q}}
\newcommand{\ZZ}{\mathbb{Z}}
\newcommand{\CC}{\mathbb{C}}
\newcommand{\cA}{\mathcal{A}}
\renewcommand{\subset}{\subseteq}
\newcommand*\patchAmsMathEnvironmentForLineno[1]{\expandafter\let\csname old#1\expandafter\endcsname\csname #1\endcsname
  \expandafter\let\csname oldend#1\expandafter\endcsname\csname end#1\endcsname
  \renewenvironment{#1}{\linenomath\csname old#1\endcsname}{\csname oldend#1\endcsname\endlinenomath}}\newcommand*\patchBothAmsMathEnvironmentsForLineno[1]{\patchAmsMathEnvironmentForLineno{#1}\patchAmsMathEnvironmentForLineno{#1*}}\AtBeginDocument{\patchBothAmsMathEnvironmentsForLineno{equation}\patchBothAmsMathEnvironmentsForLineno{align}\patchBothAmsMathEnvironmentsForLineno{flalign}\patchBothAmsMathEnvironmentsForLineno{alignat}\patchBothAmsMathEnvironmentsForLineno{gather}\patchBothAmsMathEnvironmentsForLineno{multline}}
\begin{document}

\author[J.\ Konieczny]{Jakub Konieczny}
\address[J.\ Konieczny]{Einstein Institute of Mathematics Edmond J. Safra Campus, The Hebrew University of Jerusalem Givat Ram. Jerusalem, 9190401, Israel}
\address{Faculty of Mathematics and Computer Science, Jagiellonian University in Krak\'{o}w, \L{}ojasiewicza 6, 30-348 Krak\'{o}w, Poland}
\email{jakub.konieczny@gmail.com}

\title[Multiplicative automatic sequences]{On multiplicative automatic sequences}

\begin{abstract}
	We show that any automatic multiplicative sequence either coincides with a Dirichlet character or is identically zero when restricted to integers not divisible by small primes. This answers a question of Bell, Bruin and Coons. A similar result was obtained independently by Klurman and Kurlberg.
\end{abstract}

\keywords{}
\subjclass[2010]{Primary: 11B85; Secondary: 11N64, 68R15}

\maketitle 

\section{Introduction}\label{sec:Intro}

Automatic sequences --- that is, sequences computable by finite automata --- give rise to one of the most basic models of computation. As such, for any class of sequences it is natural to inquire into which sequences in it are automatic. In particular, the question of classifying automatic multiplicative sequences has been investigated by a number of authors, including 
 \cite{Schlage-Puchta-2011},
 \cite{BellBruinCoons-2012}, 
 \cite{Hu-2017},
 \cite{AlloucheGoldmakher-2018},
\cite{Li-2019} and
\cite{KlurmanKurlberg-2019}.
The interplay between multiplicative automatic sequences is studied also in 
\cite{Yazdani-2001},
\cite{Schlage-Puchta-2003}, \cite{Coons-2010}, \cite{BellCoonsHare-2014} and
\cite{MullnerLemanczyk-2018}, among others.

The two most recant papers \cite{Li-2019}, \cite{KlurmanKurlberg-2019} listed above give a classification of completely multiplicative automatic sequences, but until now the question remained open for sequences which are multiplicative but not completely so. In particular, the authors of \cite{BellBruinCoons-2012} conjectured that a multiplicative automatic sequence agrees with an eventually periodic sequence on the primes. 

We confirm this conjecture and give some additional structural results. A similar result is also obtained in an upcoming preprint of Klurman and Kurlberg \cite{KlurmanKurlberg-2019-B}.

\begin{alphatheorem}
	If $a \colon \NN_0 \to \CC$ is an automatic multiplicative sequence then there exists a threshold $p_*$ and sequence $\chi$ which is either a Dirichlet character or identically zero such that $a(n) = \chi(n)$ for all $n$ not divisible any prime $p < p_*$.
\end{alphatheorem}

The proof naturally splits into two cases, depending on how often $a$ vanishes. These cases are addressed in Sections \ref{sec:Sparse} and \ref{sec:Dense} respectively.

\begin{remark}
Not all multiplicative sequences satisfying the conclusion of the above theorem are automatic. A full classification of automatic multiplicative sequences appears to still be out of reach of the available techniques, even if barely so. In principle, combining a slightly more precise version of this theorem discussed in subsequent sections and the classification of multiplicative periodic sequences in \cite{LeitmannWolke-1976}, we could completely classify multiplicative $k$-automatic sequences which vanish on all integers not coprime to $k$. The behaviour of $a$ on powers of primes dividing $k$ remains problematic, as evidenced by the fact that when $k$ is prime then for any Dirichlet character with modulus $k^r$ and any root of unity $\xi$, the sequence 
$a_\chi(n) := \xi^{\nu_k(n)} \chi(n/k^{\nu_k(n)})$
is multiplicative and $k$-automatic. The last sequence is a mock Dirichlet character, investigated in \cite{BellBruinCoons-2012}.
\end{remark}

\subsection{Basics and notion}

Throughout, $\NN$ denotes the positive integers and $\NN_0 := \NN \cup \{0\}$. A sequence $a \colon \NN_0 \to \CC$ is multiplicative if $a(nm) = a(n)a(m)$ for any coprime $m,n \in \NN$, and it is completely multiplicative if the assumption of coprimality can be dropped.

For $n \in \NN_0$ we let $[n] = \{1,2,\dots,n\}$ (in particular, $[0] = \emptyset$). If $p$ is a prime, $\alpha \in \NN_0$ and $n \in \NN_0$ then $\nu_p(n)$ denotes the largest power of $p$ which divides $n$ and $p^{\alpha} \parallel n$ means that $\alpha = \nu_p(n)$ (or, equivalently, that $p^\alpha \mid n$ but $p^{\alpha+1} \nmid n$). If $n,m \in \NN$ then $n \perp m$ is shorthand for $\gcd(n,m) = 1$. For two quantities $X$ and $Y$ we write $X = O(Y)$ or $X \ll Y$ if there exists an absolute constant $c$ such that $\abs{X} < cY$. 

 We let $\Sigma_k = \{0,1,\dots,k-1\}$ denote the set of digits in base $k$. For a set $X$ we let $X^*$ denote the set of words over $X$, including the empty word $\epsilon$. If $u = u_1u_2\dots u_l\in \Sigma_k^*$ then $[n]_k = u_1 k^{l-1} + u_2 k^{l-2} + \dots u_l  \in \NN_0$ denotes the integer obtained by interpreting $u$ as a digital expansion in base $k$. Conversely, if $n \in \NN_0$ then $(n)_k \in \Sigma_k^*$ denotes the expansion of $n$ in base $k$ without any leading zeros. More generally, for $l \in \NN_0$ we let $(n)_k^l$ denote the suffix of the word $0^\infty (n)_k$ of length $l$.
 
A sequence $a \colon \NN_0 \to \CC$ is $k$-automatic if there exists finite automaton $\cA = (S,s_0,\delta,\tau)$ which produces $a$. Here, $S$ is a finite set of states, $s_0 \in S$ is the initial state, $\delta$ is the transition function $\Sigma_k \times S \to S$, $(u,s) \mapsto \delta_u(s)$, extended to a map $\Sigma_k^* \times S \to S$ by $\delta_{uv} = \delta_u \circ \delta_v$, $\tau$ is an output function $\tau \colon S \to \CC$, and finally the sequence produced by $\cA$ is $[u]_k \mapsto \tau( \delta_u(s_0))$, where $u \in \Sigma_k$ does not begin with any initial zeros. A sequence is automatic if it is $k$-automatic for some $k \in \NN$.

We fix from now on the automatic multiplicative sequence $a \colon \NN_0 \to \CC$ and an automaton $\cA = (S,s_0,\delta,\tau)$ which produces it. It is well-known that if $k,l \in \NN$ are multiplicatively dependent, i.e., $\log(k)/\log(l) \in \QQ \setminus \{0\}$, then $k$-automatic sequences are the same as $l$-automatic sequences. Hence, we may assume without loss of generality that $k$ is not a perfect power.

\subsection*{Acknowledgements} The author is grateful to Oleksiy Klurman for sharing the aforementioned preprint and for helpful comments and to the anonymous Referee for careful reading of the paper. The authors also wishes to thank Jean-Paul Allouche, Michael Coons and Tamar Ziegler. This research is supported by ERC grant ErgComNum 682150. 
\section{Sparse case}\label{sec:Sparse}

Throughout this section, we make the following assumption:
\begin{equation}\label{eq:ass-sparse}\tag{$\dagger$}
	\text{There exists infinitely many primes } p \text{ such that } a(p^\alpha) =  0 \text{ for some } \alpha \in \NN.
\end{equation}
We let $Z \subset \NN_0$ denote the set of $n \in \NN$ such that $a(n) \neq 0$. It is an automatic set, i.e., a set whose characteristic sequence is automatic.

\begin{proposition}\label{prop:sparse}
	The set $Z$ is  a finite union of (possibly degenerate) geometric progressions with ratio $k^l$ ($l \in \NN_0$).
\end{proposition}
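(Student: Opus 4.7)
The plan is to parameterize $Z$ by the unique decomposition $n = k^{l} n_{0}$ with $k \nmid n_{0}$ and to show that only finitely many bases $n_{0}$ contribute. For fixed $n_{0}$, observe that $(k^{l} n_{0})_{k} = (n_{0})_{k} \cdot 0^{l}$, hence $a(k^{l} n_{0}) = \tau(\delta_{0}^{l}(\delta_{(n_{0})_{k}}(s_{0})))$; as $\delta_{0}$ eventually cycles on the finite state set $S$, the set $L_{n_{0}} := \{l \geq 0 : a(k^{l} n_{0}) \neq 0\}$ is eventually periodic. Consequently $\{k^{l} n_{0} : l \in L_{n_{0}}\}$ decomposes into finitely many geometric progressions of ratio $k^{d_{n_{0}}}$ together with finitely many degenerate singletons. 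It therefore suffices to establish the finiteness of $M := \{n_{0} \geq 1 : k \nmid n_{0},\ L_{n_{0}} \neq \emptyset\}$.

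Using multiplicativity, I factor each $n_{0} = n_{0}^{(k)} n_{0}^{(c)}$ into its $k$-smooth and coprime-to-$k$ parts. Coprimality gives $a(k^{l} n_{0}) = a(k^{l} n_{0}^{(k)}) \cdot a(n_{0}^{(c)})$, so $n_{0} \in M$ forces $n_{0}^{(c)} \in Z_{c} := Z \cap \{n : \gcd(n, k) = 1\}$. The finiteness of $M$ then splits into two subclaims: (A) $Z_{c}$ is finite; and (B) for each fixed $n_{0}^{(c)} \in Z_{c}$, only finitely many $k$-smooth factors $n_{0}^{(k)}$ with $k \nmid n_{0}^{(k)}$ admit some $l$ for which $a(k^{l} n_{0}^{(k)}) \neq 0$. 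Claim (B) is a routine unwinding: another instance of the kernel argument shows that the sequences $l \mapsto a(k^{l} m)$ indexed by $k$-smooth $m$ attain only finitely many distinct patterns, so the valuations $(\nu_{p}(n_{0}^{(k)}))_{p \mid k}$ compatible with some choice of $l$ form a finite union of lattice cosets; the bound $\nu_{p}(n_{0}^{(k)}) < e_{p}$ for at least one $p \mid k$ then leaves only finitely many valid tuples.

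Claim (A) is where hypothesis \eqref{eq:ass-sparse} is essentially invoked and is the main obstacle. The set $Z_{c}$ is $k$-automatic, so if it were infinite the pumping lemma would yield words $x, y, z$ with $y \neq \epsilon$ such that $N_{i} := [x y^{i} z]_{k} \in Z_{c}$ for every $i \geq 0$. This family satisfies a linear recurrence of the form $N_{i+1} = k^{|y|} N_{i} + C$ with $C$ a constant depending on $x, y, z, k$, so for every bad prime $p$ coprime to $k$ the residues $N_{i} \bmod p^{\alpha_{p} + 1}$ are periodic in $i$, and $\{i : \nu_{p}(N_{i}) = \alpha_{p}\}$ is either empty or an infinite arithmetic progression; in the latter case some $N_{i_{0}}$ fails to lie in $Z_{c}$, contradicting membership in the pumped family. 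The crux is then to exhibit a bad prime which triggers the latter alternative: heuristically, for all but a negligible proportion of large primes $p$ a divisibility $p \mid N_{i}$ forces $\nu_{p}(N_{i}) = \alpha_{p}$ unless a special congruence coincidence holds, and the infinitude of bad primes precludes all such coincidences simultaneously. Making this rigorous---pinning down a specific offending prime $p$ using the recurrence structure of $(N_{i})$ and the infinitude hypothesis---is the principal technical difficulty, and is where the full force of \eqref{eq:ass-sparse} enters.
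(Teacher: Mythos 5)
Your parametrization $n = k^l n_0$ with $k\nmid n_0$ and the reduction to finiteness of the set of ``bases'' $n_0$ is a reasonable reorganization of the statement, and the observation that each $L_{n_0}$ is eventually periodic (via the cycle structure of $\delta_0$ on the finite state set) is correct. However, the proof has a genuine gap exactly where you flag it, at Claim~(A), and the gap is not merely a technicality.

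Your pumping argument for Claim~(A) produces a one-parameter family $N_i = [xy^iz]_k$ satisfying a first-order linear recurrence $N_{i+1} = k^{|y|}N_i + C$. For a fixed prime $p\nmid k$, the residues $N_i \bmod p^{\beta}$ trace out a single orbit of the affine map $n\mapsto k^{|y|}n + C$ on $\ZZ/p^\beta\ZZ$. That orbit has length dividing the multiplicative order of $k^{|y|}$ modulo $p^\beta$, and it need not contain $0$, let alone pass through a residue that is divisible by $p^{\alpha_p}$ but not $p^{\alpha_p+1}$. Concretely, it is entirely possible that $p\nmid N_i$ for all $i$ and all bad primes $p$, so no contradiction is forced by one-letter pumping alone. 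The heuristic you invoke --- that the infinitude of bad primes should ``preclude all congruence coincidences simultaneously'' --- is not something that the recurrence structure by itself gives you; you would need some genuine equidistribution input, and none is obviously available.

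The paper overcomes exactly this obstruction by replacing the pumping lemma with a stronger structural dichotomy (Proposition~\ref{prop:arid-dichotomy}, imported from \cite{ByszewskiKonieczny-2019}): either $Z$ is arid, or there is a \emph{two-letter} pumping region, i.e.\ words $w,u$ and $v_1\neq v_2$ of equal length such that $a([wvu]_k)$ is a fixed nonzero constant for every $v\in\{v_1,v_2\}^*$. Restricting to $v\in\{v_1,v_2\}^r$ yields an $\IP_r^+$ set with sidelengths $([v_1]_k - [v_2]_k)k^\ell$, and for a suitably chosen bad prime power $q = p^\alpha$ (with $p\nmid k$ and $p\nmid [v_1]_k-[v_2]_k$) these sidelengths are all coprime to $p$. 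Lemma~\ref{lem:IPr-CD}, a trivial Cauchy--Davenport step, then forces the $\IP_r^+$ set to cover \emph{all} residues modulo $p^{\alpha+1}$ once $r\geq p^{\alpha+1}$, so some element is exactly divisible by $q$, giving $0\neq c = a(n) = a(q)a(n/q) = 0$. It is precisely the extra freedom of choosing each of $r$ independent letters from a two-element set --- rather than iterating a single block --- that converts ``a single affine orbit mod $p^\beta$'' into ``all of $\ZZ/p^\beta\ZZ$.'' Your framework is missing this input, and without it Claim~(A) does not follow. (Your Claim~(B) is also only sketched and, as stated, the ``finite union of lattice cosets meeting one bounded-coordinate constraint'' need not be finite; but (A) is the central missing idea.) Finally note that even granting (A) and (B), the paper still needs the rank machinery of Section~2.2--2.3 and the multiplicative--arithmetic interplay of Proposition~\ref{cor:small-gcd-in-ggp} to rule out arid sets more complicated than single geometric progressions; your proposal elides all of this by asserting that the only surviving bases are finitely many, which is what was to be shown.
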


From here, it easily follows that $a(p) = 0$ for large primes. We also note that the fact that $a$ is $k$-multiplicative imposes further restrictions on $Z$ and on the behaviour of $a$ on $Z$. In fact, it is not hard to show that when $k$ is composite, $Z$ needs to be finite. For lack of other nontrivial observations we do not delve further into this subject and devote the remainder of this section to proof of Proposition \ref{prop:sparse}

\subsection{Reduction to arid sets} Our first step is to show that $Z$ is, using the terminology borrowed from \cite{ByszewskiKonieczny-2019}, an arid set.

\begin{definition}
	A set $A \subset \NN_0$ is a \emph{basic arid set} of rank $\leq r$ if it takes the form
\begin{equation}\label{eq:def-of-arid}
		A = \set{ [u_rv_r^{l_r} u_{r-1} \dots u_1 v_1^{l_1} u_0]_k }{ l_1,\dots, l_r \in \NN_0}.
\end{equation}
	for some $u_0,\dots,u_r \in \Sigma_k^*$ and $v_1, \dots, v_r \in \Sigma_k^*$.
	A set $A \subset \NN_0$ is \emph{arid} of rank $\leq r$ if it is a union of finitely many basic arid sets of rank $\leq r$. 
	If $A \subset \NN_0$ then the \emph{rank} of $A$ is the smallest $r$ such that $A$ is contained in an arid set of rank $\leq r$, or $\infty$ if no such $r$ exists.
\end{definition}

\begin{proposition}[{\cite[Proposition 3.4.]{ByszewskiKonieczny-2019}}]\label{prop:arid-dichotomy}
	Let $b \colon \NN_0 \to \CC$ be a $k$-automatic sequence. One of the following is true:
	\begin{enumerate}
	\item \label{it:arid:A}
		The set $\set{n \in \NN_0}{b(n) \neq 0}$ is arid.
	\item \label{it:arid:B}	
		There exists $c \neq 0$ and words $w,v_1,v_2,u \in \Sigma_k^*$ such that $\abs{v_1} = \abs{v_2} > 0$, $v_1 \neq v_2$ and $a([wvu]_k) = c$ for all $v \in \{v_1,v_2\}^*$. 
	\end{enumerate}
\end{proposition}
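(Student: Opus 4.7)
The plan is to analyze the structure of accepting paths in the automaton $\cA = (S,s_0,\delta,\tau)$ producing $a$, obtaining a dichotomy based on whether some loop monoid admits "pumping freedom." For each state $s \in S$, define the loop monoid $L_s := \{u \in \Sigma_k^* : \delta_u(s) = s\}$, and call $s$ \emph{useful} if it is reachable from $s_0$ and some $s'$ with $\tau(s') \neq 0$ is reachable from $s$.

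The first observation is that if for some useful state $s$ the monoid $L_s$ contains two distinct words $v_1, v_2$ of equal length, then case \ref{it:arid:B} holds directly. Indeed, picking $w$ with $\delta_w(s_0) = s$ and $u$ with $\tau(\delta_u(s)) = c \neq 0$, every $v \in \{v_1, v_2\}^*$ again lies in $L_s$, so $a([wvu]_k) = c$.

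Assume now case \ref{it:arid:B} fails, so for every useful $s$ the monoid $L_s$ contains at most one word of each length. A standard word-combinatorial argument (if $v, w \in L_s$ then $vw, wv \in L_s$ have equal length, hence coincide, and by Fine--Wilf are powers of a common primitive word) shows that $L_s \subset \{v_s^l : l \in \NN_0\}$ for some primitive $v_s$. Being a submonoid of a copy of $\NN_0$, its exponent set is eventually an arithmetic progression; by absorbing the finitely many initial exceptions into the static words surrounding $s$, we may treat $L_s$ as $\{v_s^l : l \in \NN_0\}$ for a suitable primitive $v_s$ (or $v_s = \epsilon$, i.e.\ no loop).

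Finally, I would decompose accepting paths. Given $n \in Z$, trace the sequence of states visited while reading $(n)_k$ left-to-right; consecutive visits to the same useful state $s$ read elements of $L_s$, hence powers of $v_s$, so the reading factors canonically into alternating transition words (between distinct useful states) and loop blocks $v_s^{l}$. The sequence of distinct useful states visited is a walk of length at most $|S|$ in the quotient DAG of $\cA$ obtained by contracting loops, so only finitely many such walks exist, and between any two consecutive useful states only finitely many transition words are permitted by $\cA$; this yields finitely many combinatorial "shapes" of factorizations, each of which is a basic arid set in the form of \eqref{eq:def-of-arid}. The main obstacle I expect is this last step: producing a canonical decomposition so that every $n \in Z$ is captured by exactly one shape with the loop exponents independently ranging over all of $\NN_0$. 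This requires insisting that loops be taken maximally at every useful visit, and verifying that the "absorption" of submonoid exceptions in the previous step does not reintroduce dependencies between neighbouring blocks. The word-combinatorial step, though standard, is the most delicate technical ingredient.
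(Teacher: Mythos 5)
This proposition is not proved in the paper at all: it is quoted verbatim from \cite[Proposition~3.4]{ByszewskiKonieczny-2019}, and the present paper uses it as a black box. There is therefore no ``paper's own proof'' to compare your argument against; what you have written is an attempt to reprove a cited background result.

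As a proof sketch of that background result, your outline follows a natural and essentially standard route for arid/slender regular languages: either two distinct equal-length loops at a useful state give a free submonoid and hence alternative~(\ref{it:arid:B}), or all loop monoids commute and the automaton is ``single-threaded,'' forcing aridity. The first half is sound. Two caveats, though. First, a notational one: with this paper's convention $\delta_{uv}=\delta_u\circ\delta_v$ the automaton consumes the least-significant digit first, so to exhibit $a([wvu]_k)=c$ you need $\delta_u(s_0)=s$ and $\tau(\delta_w(s))=c\neq 0$, i.e.\ the roles of $w$ and $u$ in your sentence are interchanged; you also need $w$ chosen without a leading zero, since $a$ is evaluated on canonical expansions. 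Second, and more substantively, the final step is genuinely incomplete, as you yourself flag. The intervals from the first to the last visit of a state in an accepting walk can interleave (e.g.\ the state sequence $s,t,s,t$), so the walk does not factor into disjoint single-state loop blocks in any naive way; one has to pass to the SCC condensation, argue that in the no-(\ref{it:arid:B}) case every useful SCC carries a single primitive cycle word up to rotation, and then show that the entry point, exit point, loop exponent, and inter-SCC transitions determine a factorisation of bounded rank with only finitely many ``shapes.'' Likewise, replacing a submonoid $L_s\subset v_s^*$ by the full power set $\{v_s^l:l\in\NN_0\}$ (absorbing finitely many exceptions and a common difference) needs to be done compatibly across all loop positions so that the exponents become genuinely free parameters in the sense of \eqref{eq:def-of-arid}. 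These are the real technical content of the cited proposition and cannot simply be asserted; until they are carried out the argument is a plausible plan rather than a proof.
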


In the following argument it will be convenient to use the notion of an $\IP_r^+$ set (or a Hilbert cube of dimension $r$), that is, the set of the form
\begin{equation}\label{eq:def-of-IPr}
	A = \set{\textstyle n_0 + \sum_{i \in I} n_i }{ I \subset [r] },
\end{equation}
where $n_0 \in \NN_0$ and $n_1,\dots,n_r \in \NN$. We refer to $n_1,\dots,n_r$ as the \emph{sidelengths} of $A$. 
Note that in condition \ref{prop:arid-dichotomy}(\ref{it:arid:B}), the words $w,u$ can be empty but $v_1,v_2$ cannot, and for each $r \geq 0$ the set $\set{ [wvu]_k}{v \in \{v_1,v_2\}^r}$ is $\IP_r^+$.

\begin{lemma}\label{lem:IPr-CD}
	Let $m \in \ZZ$ and let $A \subset \NN_0$ be an $\IP_r^+$ set with sidelengths coprime to $m$. Then $\#(A \bmod m) \geq \min(m,r+1)$.
\end{lemma}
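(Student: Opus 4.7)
The plan is to induct on $r$. The base case $r = 0$ is immediate since $A = \{n_0\}$ and $\min(m, 1) = 1$. For the inductive step, I would split according to whether $r \in I$ to write $A = A' \cup (A' + n_r)$, where $A' := \{n_0 + \sum_{i \in I'} n_i : I' \subset [r-1]\}$ is the $\IP_{r-1}^+$ set with sidelengths $n_1, \dots, n_{r-1}$ (which remain coprime to $m$). Setting $B := A' \bmod m \subset \ZZ/m\ZZ$, the inductive hypothesis gives $\abs{B} \geq \min(m, r)$, and the goal reduces to proving $\abs{B \cup (B + n_r)} \geq \min(m, r+1)$.

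If $\abs{B} = m$ there is nothing to check. In the remaining case $\abs{B} < m$, the key input is that $\gcd(n_r, m) = 1$, so $n_r$ generates the additive group $\ZZ/m\ZZ$. Consequently no proper subset of $\ZZ/m\ZZ$ can be invariant under the shift $x \mapsto x + n_r$ (else it would be a union of cosets of $\langle n_r \rangle = \ZZ/m\ZZ$, and therefore equal to all of $\ZZ/m\ZZ$). Thus $B + n_r \neq B$, which forces $\abs{B \cup (B + n_r)} \geq \abs{B} + 1 \geq r + 1$, yielding the required bound.

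I do not anticipate any serious obstacle: the argument is a short induction whose only subtlety lies in separating the saturated case $\abs{B} = m$ from the unsaturated one, and both fall out cleanly from the coprimality hypothesis. The role played here by the coprimality of $n_r$ with $m$ — namely, that translation by $n_r$ cannot stabilise a proper subset — is exactly what makes the $\IP$ structure efficient at filling out $\ZZ/m\ZZ$.
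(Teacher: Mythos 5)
Your proof is correct and takes essentially the same route as the paper: both induct on $r$, both decompose $A$ as $A' \cup (A' + n_r)$ for an $\IP_{r-1}^+$ set $A'$, and both use the coprimality of $n_r$ with $m$ to conclude that if $A' \bmod m$ is not all of $\ZZ/m\ZZ$ then adding the translate $A' + n_r$ must produce at least one new residue. The paper phrases this last step as a ``trivial case of the Cauchy--Davenport theorem'' without spelling it out; you instead make the underlying observation explicit --- that a proper subset of $\ZZ/m\ZZ$ cannot be stable under translation by a generator --- which is cleaner here since $m$ need not be prime and the general Cauchy--Davenport inequality would not directly apply.
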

\begin{proof}
	We proceed by induction on $r$, the case $r = 0$ being trivial. If $r \geq 1$ then we can construe $A$ as the sumset $A' + \{0,n_r\}$ of an $\IP_{r-1}^+$ set $A'$ and a two-element set. Either $\#(A' \bmod m) = m$, in which case we are done, or there exists $n \in A' \bmod m$ such that $n+n_r \not \in A' \bmod{m}$, in which case $\#(A \bmod m) \geq \#(A' \bmod m) + 1 \geq r+1$ so we are also done.
\end{proof}

\begin{proposition}
	The set $Z$ is arid.
\end{proposition}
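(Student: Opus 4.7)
The plan is to invoke Proposition~\ref{prop:arid-dichotomy} and rule out its alternative~\ref{it:arid:B} using the sparse hypothesis~\eqref{eq:ass-sparse}. Suppose, towards a contradiction, that \ref{it:arid:B} holds: there exist $c \neq 0$, a common length $d := |v_1| = |v_2|$, and words $w,u,v_1,v_2 \in \Sigma_k^*$ with $v_1 \neq v_2$ such that $a([wvu]_k) = c$ for every $v \in \{v_1,v_2\}^*$. For each $r \in \NN$ I would consider
\[
A_r := \set{[wvu]_k}{v \in \{v_1,v_2\}^r}.
\]
Expanding $[wvu]_k$ by recording which of the $r$ length-$d$ blocks of $v$ equals $v_2$ versus $v_1$, a direct computation shows that $A_r$ is an $\IP_r^+$ set in the sense of~\eqref{eq:def-of-IPr} whose sidelengths all take the form $D \cdot k^{|u| + jd}$ with $j \in \{0,1,\dots,r-1\}$, where $D := [v_2]_k - [v_1]_k$ (WLOG $D > 0$). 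By construction, $a \equiv c \neq 0$ on $A_r$.

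Using~\eqref{eq:ass-sparse}, and noting that only finitely many primes divide $kD$, I would pick a prime $p$ with $p \nmid kD$ for which $a(p^\alpha) = 0$ for some $\alpha \in \NN$, fixing such $\alpha$ minimal. Since $p \nmid kD$, every sidelength of $A_r$ is coprime to $p^{\alpha+1}$, so Lemma~\ref{lem:IPr-CD} applied with $m = p^{\alpha+1}$ and any $r \geq p^{\alpha+1}-1$ forces $A_r$ to meet every residue class modulo $p^{\alpha+1}$. In particular, some $m \in A_r$ satisfies $m \equiv p^\alpha \pmod{p^{\alpha+1}}$, i.e.\ $p^\alpha \parallel m$; writing $m = p^\alpha m'$ with $p \nmid m'$ and invoking multiplicativity of $a$ gives $a(m) = a(p^\alpha)a(m') = 0$, contradicting $a(m) = c$. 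Hence alternative~\ref{it:arid:B} is impossible and $Z$ must be arid.

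The main obstacle is the bookkeeping step of realising $A_r$ as an $\IP_r^+$ set whose sidelengths share a common factor of $r$-independent shape $Dk^{|u|}$. This uniformity is what lets me select a single prime $p$ coprime to every sidelength simultaneously for all sufficiently large $r$, and then enlarge $r$ until Lemma~\ref{lem:IPr-CD} saturates the modulus $p^{\alpha+1}$; without it, one cannot rule out a coincidence between the arithmetic structure of the $A_r$'s and the primes on which $a$ vanishes. After that, combining Proposition~\ref{prop:arid-dichotomy}, Lemma~\ref{lem:IPr-CD}, and multiplicativity of $a$ produces the contradiction cleanly.
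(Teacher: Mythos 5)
Your proof is correct and follows essentially the same approach as the paper: invoke Proposition~\ref{prop:arid-dichotomy}, realise $A_r$ as an $\IP_r^+$ set with sidelengths of the form $([v_2]_k-[v_1]_k)k^{|u|+jd}$, use~\eqref{eq:ass-sparse} to choose a prime $p\nmid k([v_2]_k-[v_1]_k)$ with $a(p^\alpha)=0$, then apply Lemma~\ref{lem:IPr-CD} with $m=p^{\alpha+1}$ and $r$ large to find $m\in A_r$ with $p^\alpha\parallel m$, so that multiplicativity forces $a(m)=a(p^\alpha)a(m/p^\alpha)=0\neq c$. Incidentally, your version corrects what appears to be a typo in the paper, which writes ``$a(q)\neq 0$'' where the displayed contradiction $0\neq c=a(n)=a(n/q)a(q)=0$ clearly requires $a(q)=0$.
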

\begin{proof}
	For the sake of contradiction, suppose that $Z$ was not arid, and let $c$ and $w,v_1,v_2,u$ be as in Proposition \ref{prop:arid-dichotomy}(\ref{it:arid:B}). Assumption \eqref{eq:ass-sparse} guarantees that we can find a prime power $q = p^\alpha$ such that $a(q) = 0$ and $p \nmid [v_1]_k - [v_2]_k$ and $p \nmid k$. Pick $r := p^{\alpha+1}$ and consider the set
	\[
		A := \set{[wvu]_k}{ v \in \{v_1,v_2\}^r }.
	\]
	It follows directly from the defining conditions in \ref{prop:arid-dichotomy}(\ref{it:arid:B}) that $a(n) = c$ for all $n \in A$. On the other hand, $A$ is an $\IP_r^+$ set with sidelengths of the form $([v_1]_k - [v_2]_k)k^{l}$ with $l \in \NN_0$, which are not divisible by $p$. By Lemma \ref{lem:IPr-CD}, $A$ covers all residues modulo $r$. In particular, $A$ contains an integer $n$ exactly divisible by $q$, whence 
	\[ 0 \neq c = a(n) = a(n/q) a(q) = 0,\]
	which is the sought for contradiction.
\end{proof}

We are now left with the task of showing that arid sets cannot be the level sets of multiplicative sequences, except for the arguably trivial cases of geometric progressions whose ratio is a power of $k$. 

\subsection{Base $k$ geometric progressions} 
While arid sets are well-adjusted for studying combinatorial properties of base $k$ expansions, in order to study arithmetic properties it is convenient to work in a slightly more general setup. 
We define a \emph{generalised geometric progression} of rank $\leq r$  as a set of the form
\begin{equation}\label{eq:def-of-ggp}
	A = \set{ x_0 + \sum_{i=1}^r x_i k^{\alpha_i} }{\alpha_1, \dots, \alpha_r \in \NN_0},
\end{equation}
where $x_0,x_1,\dots,x_r \in \QQ$. For the sake of uniformity, define also $\alpha_0 := 0$
Likewise, we define a \emph{restricted generalised geometric progression} of rank $\leq r$ as a set of the form
\begin{equation}\label{eq:def-of-rggp}
	A = \set{x_0 + \sum_{i=1}^r x_i k^{\alpha_i} }{ \alpha_1 \in F_1,\ \alpha_2 \in F_2(\alpha_1), \dots, \alpha_r \in F_r(\alpha_1,\dots,\alpha_{r-1})},
\end{equation}
where $x_1, \dots, x_r \in \QQ$ and $F_i$ are maps $\NN^{i-1}_0 \to \mathscr{P}_{\mathrm{inf}}(\NN_0)$ for each $i \in [r]$. Here, $\mathscr{P}_{\mathrm{inf}}(X)$ denotes the set of all infinite subsets of a set $X$. In a fully analogous manner we define \emph{restricted arid sets} of rank $\leq r$ as sets of the form
\begin{equation}\label{eq:def-of-rarid}
	A = \set{ [u_rv_r^{l_r} u_{r-1} \dots u_1 v_1^{l_1} u_0]_k }{ l_1 \in F_1,\ l_2 \in F_2(l_1), \dots, l_r \in F_r(l_1,\dots,l_{r-1})},
\end{equation}
where $u_0,\dots,u_r \in \Sigma_k^*$ and $v_1, \dots, v_r \in \Sigma_k^*$ and $F_i$ are like above.

Given sequences $F_i$ as in \eqref{eq:def-of-rggp}, let us call a vector $(\alpha_0,\alpha_1, \dots, \alpha_s)$ of length $s \leq r$ \emph{admissible} if $\alpha_0 = 0$ and $\alpha_i \in F_i(\alpha_1,\dots,\alpha_{i-1})$ for all $i \in [s]$, $0 \leq s \leq r$. The elements of the restricted generalised geometric progression $A$ given by \eqref{eq:def-of-rggp} can naturally be indexed by the leaves of a regular rooted tree with vertex degree $\infty$, whose vertices are admissisible sequences $(\alpha_0,\dots,\alpha_s)$, whose root is $(0)$ and whose edges are given by $(\alpha_0,\dots,\alpha_s) \to (\alpha_0,\dots,\alpha_s,\alpha_{s+1})$. By induction on $r$ we see that if the leaves of such a tree are coloured by finitely many colours then there exists an infinite regular subtree of depth $r$ with monochromatic leaves. As a consequence, restricted generalised geometric progressions of a given rank are partition regular. The same observation, mutatis mutandis, applies to restricted arid sets.

It is clear that any (restricted) arid set is a (restricted) generalised geometric progression. The following lemma provides a partial converse to this statement.

\begin{lemma}\label{lem:base-k-exp-of-rggp}
	For any $x_0,x_1,\dots,x_r \in \QQ$ there exists $B \in \NN$ and $C > 0$ such that the following holds. Suppose that $0 = \alpha_0, \alpha_1, \dots, \alpha_r \in \NN_0$ is a sequence such that 
	\begin{equation}\label{eq:209:1}
		x_0 + \sum_{i=1}^r x_i k^{\alpha_i} \in \NN
	\end{equation}
	and $\alpha_{i} \geq \alpha_{i-1} + C$ for all $i \in [r]$. Then there exist words $u_0 \in \Sigma_k^{B}$, $u_1, \dots, u_r \in \Sigma_k^{3B}$ and $v_1,\dots,v_r \in \Sigma_k^B$ such that 
	\begin{equation}\label{eq:209:2}
		x_0 + \sum_{i=1}^r x_i k^{\alpha_i} = [u_r v_{r}^{l_r} u_{r-1} \dots u_1 v_1^{l_1} u_0]_k,
	\end{equation}
	where for $i \in [r]$ the lengths $l_i \in \NN_0$ are uniquely determined by
	\begin{equation}\label{eq:209:3}
		0 \leq \alpha_i - B \bra{\textstyle \sum_{j=1}^i l_j + 3i - 1} < B.
	\end{equation}
	If additionally $x_i \neq 0$ for all $i \in [r]$ then the expansion in \eqref{eq:209:2} is nondegenerate in the sense that $u_r \neq 0^{B}$ and there is no $i \in [r]$ such that $v_i^3 = u_i = v_{i-1}^3$.
\end{lemma}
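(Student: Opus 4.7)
The plan is to prove the lemma by explicitly reading off the base-$k$ digit expansion of the integer $N := x_0 + \sum_{i=1}^r x_i k^{\alpha_i}$. Every rational $x_i$ has an eventually periodic base-$k$ expansion, so I fix a common period $t$ (for instance, $t$ equal to the multiplicative order of $k$ modulo the product of the $k$-coprime parts of the denominators of the $x_i$) and a uniform bound $P$ on the lengths of all pre-periods and integer parts. I then choose $B$ to be a multiple of $t$ with $B > P$, and take $C$ to be a large multiple of $B$, say $C = 10B$.

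Under these choices, the base-$k$ expansion of each shifted term $x_i k^{\alpha_i}$ is zero above position $\alpha_i + P$, takes values determined by the integer part and pre-period of $x_i$ in the transition window $[\alpha_i - P, \alpha_i + P]$, and is $B$-periodic at all positions strictly below $\alpha_i - P$ (including negative positions, which encode a fractional tail). Because the $\alpha_i$'s are separated by at least $C \gg B$, the transition windows around distinct $\alpha_i$'s are pairwise disjoint, and in each gap $(\alpha_{i-1} + P, \alpha_i - P)$ the digits of $N$ are the sum (with carries) of the $B$-periodic tails of the $x_j k^{\alpha_j}$ with $j \geq i$, together with a bounded contribution from $x_0$ confined near the bottom; this sum is itself $B$-periodic once the carries stabilize. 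The hypothesis $N \in \NN$ forces the fractional contributions from all terms to combine into an integer carry-in at position $0$, and a direct induction up the positions shows that in each gap the digits of $N$ are honestly $B$-periodic after a buffer of $O(B)$ digits.

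I then define $u_0$ to be the lowest $B$ digits of $N$, $u_r$ to be the top $3B$ digits (sufficient since the integer part of $x_r k^{\alpha_r}$ has length below $B$), and $u_i$ for $1 \leq i < r$ to be the $3B$-digit window positioned so as to contain the transition around $\alpha_i$ together with buffers absorbing residual carries; in the gap between $u_{i-1}$ and $u_i$ I set $v_i$ to be the $B$-digit periodic block. The alignment formula $0 \leq \alpha_i - B(\sum_{j \leq i} l_j + 3i - 1) < B$ pins down $l_i$ uniquely and places $\alpha_i$ inside the central $B$-slice of $u_i$, so that the transition digits (within $\pm P$ of $\alpha_i$) and the short-range carries are safely enclosed by $u_i$. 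For non-degeneracy, $x_r \neq 0$ forces the highest digits of $x_r k^{\alpha_r}$ inside $u_r$ to be nonzero, giving $u_r \neq 0^B$; and the $B$-periodic patterns $v_i$ and $v_{i-1}$ sit on opposite sides of the transition at $\alpha_{i-1}$ and differ by exactly the periodic contribution of $x_{i-1}$, which is nonzero by hypothesis, so $v_i \neq v_{i-1}$ and the simultaneous equalities $v_i^3 = u_i = v_{i-1}^3$ cannot both hold. The principal obstacle is bookkeeping the carries arising from adding several infinite periodic expansions together with fractional contributions that conspire to produce an integer; the choice $B \gg P$ and $C \gg B$ ensures that these carries propagate only short distances and are contained within the $3B$-digit buffer of each $u_i$.
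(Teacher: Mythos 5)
Your approach --- reading off the eventually periodic base-$k$ expansions of the rationals $x_i$ directly --- differs from the paper's, which clears denominators by multiplying through by a suitably rich integer $M$, performs the long addition for the integers $Mx_0,\dots,Mx_r$ (so that each $v_i$ is simply $0^B$ or $(k-1)^B$), and then divides back by $M$, using the fact that long division by a fixed $M$ sends periodic digit strings to periodic digit strings. That said, there is a genuine gap in your nondegeneracy argument. You claim that $v_i$ and $v_{i-1}$ ``differ by exactly the periodic contribution of $x_{i-1}$, which is nonzero by hypothesis, so $v_i \neq v_{i-1}$.'' But the periodic tail of a rational $x_{i-1}$ in base $k$ is $\overline{0}$ whenever $x_{i-1}$ is a positive integer (and $\overline{k-1}$ when it is a negative integer), so $x_{i-1}\neq 0$ does not make that tail nonzero. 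Concretely, with $r=2$, $x_0=0$, $x_1=x_2=1$ one has $N=k^{\alpha_1}+k^{\alpha_2}$, hence $v_1=v_2=0^B$ even though $x_1\neq 0$: your claimed inequality $v_i\neq v_{i-1}$ fails. Nondegeneracy does hold in this example, but for a different reason --- $u_1$ contains the nonzero digit $1$ coming from the integer part of $x_1k^{\alpha_1}$, so $u_1\neq v_1^3=0^{3B}$. The argument must run through the $u$-block seeing the nonzero contribution of $x_i$, not through the $v$-blocks being distinct; the latter is neither needed nor generally true.

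A secondary concern: you assert the carries ``propagate only short distances and are contained within the $3B$-digit buffer of each $u_i$.'' This is not so: with $x_0=0$, $x_1=-1$, $x_2=1$ the borrow from $k^{\alpha_2}-k^{\alpha_1}$ travels across the entire gap, turning it into $(k-1)^{\alpha_2-\alpha_1}$. What is true, and what you actually need, is that within each gap the digit sequence becomes $B$-periodic after a bounded buffer; this follows because the carry-in to each length-$B$ block ranges over a bounded set (determined by the finitely many periodic summands) and therefore stabilizes. The proposal should be rephrased in those terms. The paper's reduction to the integer case sidesteps both issues: before dividing by $M$ the only $v$-patterns are $0^B$ and $(k-1)^B$, and nondegeneracy is visibly carried by the nonzero digits of $Mx_i$ inside $u_i$.
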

\begin{proof}
	This follows by inspection of the standard long addition procedure. Suppose first that $x_0,x_1,\dots, x_r$ were all positive integers. Then the conclusion would hold with $v_i = 0^B$ and $u_i = 0^* (x_i)_k 0^*$, where $0^*$ denotes an unspecified string of zeros. If we drop the assumption of positivity, then the same conclusion holds except $v_i$ can also take the form $(k-1)^B$ and $u_i$ need to be modified accordingly. Finally, if $x_i$ are rational then apply the above reasoning to the sequence $M x_0, M x_1, \dots, M x_r$ where $M$ is multiplicatively rich enough that the latter sequence consists of only integers, and use the fact that division by $M$ takes periodic digital expansions to periodic digital expansions. 
\end{proof}
\begin{remark}
\begin{enumerate}[wide]
\item	The definition of $l_i$ in \eqref{eq:209:3} is arranged so that if the right hand side of \eqref{eq:209:2} is construed as the $B$-block expansion of the sum on the left hand side (with each $v_i$ occupying one block and $u_i$ occupying three blocks) then the position $\alpha_i$ falls in the middle block of $u_i$ for all $i \in [r]$.
\item The constant $B$ can be replaced by any multiple, and the constant $C$ can always be enlarged. We could have required that $B = C$, but we believe that would decrease the intuitive appeal of the result.
\end{enumerate}
\end{remark}

Our definition rank guarantees that if $A$ is a (restricted) arid set of rank $\leq r$ then $\rank A \leq r$. It follows from Lemma \ref{lem:base-k-exp-of-rggp} that if $A \subset \NN_0$ is a (restricted) generalised geometric progression of rank $\leq r$ then also $\rank A \leq r$. Below we show that in the situation above we have equality $\rank A = r$, except for some degenerate cases.

\begin{lemma}\label{lem:rank-basic}
	Let $B \in \NN$ and let $u_0 \in \Sigma_k^{B}$, $u_1, \dots, u_r \in \Sigma_k^{3B}$ and $v_1,\dots,v_r \in \Sigma_k^B$ be nondegenerate in the sense of Lemma \ref{lem:base-k-exp-of-rggp}, and let $A$ be the corresponding arid set given by \eqref{eq:def-of-arid}. Then $\rank A = r$.
\end{lemma}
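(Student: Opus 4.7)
The plan is to proceed by induction on $r$, the base case $r=0$ (where $A=\{[u_0]_k\}$) being trivial. The upper bound $\rank A \le r$ is immediate from \eqref{eq:def-of-arid}, so the content lies in ruling out $\rank A \le r-1$. Accordingly, suppose for contradiction that $A$ is contained in a finite union $A' = B_1 \cup \dots \cup B_m$ of basic arid sets $B_j$ of rank $\le r-1$, each of the form
\[
B_j = \set{ [u'_{r-1,j} (v'_{r-1,j})^{l'_{r-1}} \dots u'_{1,j} (v'_{1,j})^{l'_1} u'_{0,j}]_k }{ l'_1,\dots,l'_{r-1} \in \NN_0 }.
\]

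The first step is to pass, by a Ramsey-theoretic argument, to an infinite ``restricted'' sub-arid subset of $A$ sitting inside a single $B_j$. Concretely, the elements of $A$ are naturally indexed by leaves of a regular rooted tree of depth $r$ (each level $i$ corresponding to the choice of $l_i \in \NN_0$); colouring each leaf by the index $j \in [m]$ of a $B_j$ containing the corresponding integer and invoking the partition-regularity property of restricted arid sets of rank $\le r$ noted after \eqref{eq:def-of-rarid}, I find a restricted arid subset $A^* \subset A$ of rank $\le r$, with the same underlying words $u_i, v_i$ but with parameters $(l_1,\dots,l_r)$ ranging over an admissible tree with infinite sidelengths, such that $A^* \subset B_j$ for one fixed $j$. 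I may additionally assume the $l_i$'s are spaced arbitrarily (by thinning the tree further), so that the blocks $v_i^{l_i}$ in the expansion of an element of $A^*$ are very long compared to everything in sight, and well separated.

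The core step is then a rigidity argument about digital expansions. Every $n \in A^*$ admits two presentations: the native one from $A$, with $r$ long periodic runs $v_i^{l_i}$ separated by the fixed words $u_i$ of length $B$ or $3B$; and the one from $B_j$, with only $r-1$ long periodic runs $(v'_{i,j})^{l'_i}$. Using Lemma~\ref{lem:base-k-exp-of-rggp} (applied to the parametric integer $x_0 + \sum x_i k^{\alpha_i}$ written in the $B_j$-form) one sees that when the $l_i$ are large enough, the positions and lengths of the runs in each presentation are determined up to $O(1)$ by the parameters; hence the $r$ runs $v_i^{l_i}$ from $A$ must be distributed, as windows into the digit string, among the $r-1$ runs from $B_j$. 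By the pigeonhole principle, some two consecutive runs $v_i^{l_i}$ and $v_{i+1}^{l_{i+1}}$ are swallowed by a single periodic run $(v'_{s,j})^{l'_s}$; combined with the fact that $u_i$ lies between them and is fixed, this forces $v_i$, $v_{i+1}$, and the ``middle block'' of $u_i$ all to coincide with the same cyclic shift of a power of $v'_{s,j}$, and in particular forces $v_i^3 = u_i = v_{i+1}^3$. This directly contradicts the nondegeneracy hypothesis. (A separate, easier variant of this argument handles the boundary case at $i = r$, where the contradiction is obtained from $u_r \ne 0^B$ and the requirement that no leading digits of $A^*$ be zero.)

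I expect the real work to be in the third paragraph, namely the bookkeeping that turns ``two presentations of the same digit string'' into the clean equation $v_i^3 = u_i = v_{i+1}^3$. The cleanest way to control this should be to exploit the freedom to make the $l_i$ increase arbitrarily fast along the tree, so that each block $v_i^{l_i}$ is much longer than the combined length of every other fixed block; then each periodic run in the $B_j$-presentation that meets a $v_i^{l_i}$-block must have its period a cyclic rotation of $v_i$, and the alignment of such periodic runs across the intervening fixed block $u_i$ is forced. Once this alignment is in hand the contradiction with nondegeneracy is purely combinatorial on words.
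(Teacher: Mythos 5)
Your route is genuinely different from the paper's. The paper derives Lemma~\ref{lem:rank-basic} from the stronger Lemma~\ref{lem:rank-strong}, which is stated for restricted generalised geometric progressions in arithmetic form $x_0 + \sum_i x_i k^{\alpha_i}$; the engine there is Lemma~\ref{lem:lin-rels}, a compactness/induction statement saying any relation $\sum_i x_i k^{\alpha_i} = 0$ must split along a partition of the index set into clusters whose exponents lie within $O(1)$ of one another. This converts the rank comparison into pure counting: once the $\alpha_i$ are spaced beyond the constant, each $A$-exponent sits in its own cluster, each cluster must contain at least one $B$-exponent (by minimality of $s$), so $s \geq r$. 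Your plan is instead the word-combinatorial argument that the paper explicitly lists (``relying on the intuition that\ldots\ it is essentially possible to recover $r$, $u_0,\dots,u_r$, $v_1,\dots,v_r$ and $l_1,\dots,l_r$'') and then declines to pursue.

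Your plan is sound but, as you acknowledge, the rigidity step is only sketched and is where the real work lies. To close it, after the Ramsey step you should further thin so that the $B_j$-exponents $l'_s$ also tend to infinity along the subtree (if some $l'_s$ stays bounded, absorb that run into the fixed words, which only lowers the rank of $B_j$ further); then each huge $v_i^{l_i}$ must share a long overlap with some $(v'_{s,j})^{l'_s}$, pigeonhole together with the fact that runs occupy intervals of positions gives two consecutive $A$-runs inside a single $B_j$-run, and Fine--Wilf on the overlap propagates period $\gcd(B, |v'_{s,j}|)$ --- hence period $B$ --- across the entire stretch $v_{i+1}^{l_{i+1}} u_i v_i^{l_i}$. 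Since $u_i$ and the $v$-blocks are aligned to multiples of $B$, this yields $u_i = v_i^3 = v_{i+1}^3$, contradicting nondegeneracy. Two cosmetic points: the induction on $r$ you announce is never used and can be dropped, and citing Lemma~\ref{lem:base-k-exp-of-rggp} for the $B_j$-presentation is unnecessary since $B_j$ is already given in digital form. In exchange for the auxiliary compactness lemma, the paper's route avoids exactly this alignment bookkeeping; your route is more elementary in its tools but heavier in detail.
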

\begin{proof}
	Since $A$ is given by  \eqref{eq:def-of-arid} and nondegenerate in the sense of Lemma \ref{lem:base-k-exp-of-rggp} we have $\# A \cap [N] \gg N^r$ for $N \to \infty$. On the other hand, $\# A \cap [N] \ll N^{\rank A}$, whence $\rank A \geq r$. It remains to recall that also $\rank A \leq r$.
\end{proof}

\begin{remark}
The above lemma can also be derived from the following result, which was used in a previous draft of this paper. We include it since it goes some way towards justifying the many notions of a rank that are implicitly introduced above, but we omit the proof which is technical and not used on the route to the proof of our main result.
\begin{lemma*}\label{lem:rank-strong}
	Let $A$ be a restricted generalised geometric progression of rank $\leq r$ given by \eqref{eq:def-of-rggp} with $x_1,\dots,x_r \neq 0$. Then $\rank A = r$.
\end{lemma*}
\end{remark}

\subsection{Multiplication and arid sets}

Recall that the set $Z$ of nonzero places of $a$ is closed under products of coprime elements. More generally, if $n,m \in Z$ and $d \in \NN$ is such that $d \mid n$, $n/d \perp n$ and $n/d \perp m$ then also $mn/d \in Z$. This motivates the interest in the following lemma.

\begin{lemma}\label{lem:small-gcd-in-ggp}
	For any $u,v,w \in \Sigma_k^*$ with $[u]_k,[w]_k \neq 0$ there exists $D \in \NN$ such that the following is true.
	For any prime $p$ there exists $Q \in \NN$ such that if $l \in \NN$ and $Q \mid l$ then
	\[ 
		\nu_p\bra{[wv^lu]_k} \leq \nu_p(D). 
	\]
\end{lemma}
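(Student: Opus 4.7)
The plan is to derive a closed-form expression for $n_l := [wv^lu]_k$ as a function of $l$, and then apply the ultrametric inequality after splitting on whether $p \mid k$. If $v$ is empty then $n_l = [wu]_k$ is constant and the conclusion is immediate with $D := [wu]_k$, so assume $|v| \geq 1$ and set $a := |u|$, $b := |v|$, $K := k^b$. Using $[v^l]_k = [v]_k(K^l-1)/(K-1)$ and clearing denominators,
\[
(K-1)n_l \;=\; P + R\,K^l, \qquad P := (K-1)[u]_k - [v]_k\,k^a, \qquad R := k^a\bigl([wv]_k - [w]_k\bigr).
\]
A direct computation gives the identity $P + R = (K-1)[wu]_k$ (matching $n_0 = [wu]_k$), and one checks that each of $P, R, [wu]_k$ is nonzero: $R$ and $[wu]_k$ are positive since $[w]_k \neq 0$, while $P = 0$ would force $k^a \mid (K-1)[u]_k$ and hence, using $\gcd(K-1,k)=1$, $k^a \mid [u]_k$, contradicting $0 < [u]_k < k^a$. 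I would then propose $D := [wu]_k \cdot |P|$.

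In the case $p \mid k$, we have $\nu_p(K-1) = 0$ while $\nu_p(K^l) \geq l$. Choosing $Q := \nu_p(P)+1$, any $l$ with $Q \mid l$ satisfies $\nu_p(RK^l) \geq l > \nu_p(P)$, so the strong triangle inequality gives $\nu_p((K-1)n_l) = \nu_p(P)$, whence $\nu_p(n_l) = \nu_p(P) \leq \nu_p(D)$. In the case $p \nmid k$, $K$ is a unit mod $p^M$ for every $M$ and hence has a finite multiplicative order $h_M$ there. Setting $M := \nu_p\bigl((K-1)[wu]_k\bigr)+1$ and $Q := h_M$, the condition $Q \mid l$ gives $K^l \equiv 1 \pmod{p^M}$, say $K^l = 1 + p^M s$ with $s \in \ZZ$, so
\[
P + R K^l \;=\; (P+R) + R\,p^M s \;=\; (K-1)[wu]_k + R\,p^M s.
\]
Since $\nu_p(R\,p^M s) \geq M > \nu_p\bigl((K-1)[wu]_k\bigr)$, the strong triangle inequality collapses the valuation to $\nu_p((K-1)n_l) = \nu_p\bigl((K-1)[wu]_k\bigr)$, hence $\nu_p(n_l) = \nu_p([wu]_k) \leq \nu_p(D)$.

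The central non-trivial observation is the algebraic identity $P + R = (K-1)[wu]_k$: without this cancellation the ``main terms'' in the unramified case would not combine into an expression whose $p$-adic valuation is independent of $p$ after dividing by $K-1$. Once that identity is in hand, the proof reduces to two standard facts: $\nu_p(K^l) \to \infty$ when $p \mid k$, and the continuity of $l \mapsto K^l$ in $\ZZ_p$ (i.e., $K$ having finite order modulo $p^M$) when $p \nmid k$. The strong triangle inequality then finishes in both cases.
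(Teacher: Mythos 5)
Your proof is correct and follows essentially the same route as the paper: you arrive at exactly the paper's $D$ (their $D_0 = |P|$ and $D_1 = [wu]_k$), split on $p \mid k$ versus $p \nmid k$, and in each case use the $p$-adic convergence of $K^l$ (to $0$ or to $1$ along a suitable arithmetic progression of exponents) together with the ultrametric inequality. The only cosmetic difference is that you clear the denominator $K-1$ up front and package the two limits via the identity $P+R=(K-1)[wu]_k$, which makes the parallelism between the two cases slightly more visible than the paper's direct computation with the closed form for $[wv^lu]_k$.
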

\begin{proof}
	For reasons which will become clear in the course of the argument, we will take $D := D_0 D_1$ where $D_1 := [wu]_k$ and $D_0 := \abs{ k^{\abs{u}} [v]_k -(k^{\abs{v}}-1) [u]_k}$. Note that $D_1 \neq 0$ since $[w]_k \neq 0$ and $D_0 \neq 0$ since $D_0 \equiv (k^{\abs{v}}-1) [u]_k \not \equiv 0 \bmod{k^{\abs{u}}}$. The argument splits into two cases, depending on whether $p$ divides $k$. Note that in full generality we have
	\[
		[wv^lu]_k = [w]_k k^{l \abs{v} + \abs{u}} + [v]_k \frac{k^{l\abs{v}}-1}{k^{\abs{v}}-1} k^{\abs{u}} + [u]_k.
	\]
	
	Suppose first that $p \nmid k$. For any $\omega \in \NN_0$ there exists $Q_{\omega}$ such that 
	\[
		[wv^lu]_k \equiv [w]_k k^{\abs{u}} + [u]_k = D_1 \pmod{p^\omega}
	\]
	for all $l \in \NN$ divisible by $Q_\omega$. In particular, letting $\omega > \nu_p(D_1)$ we conclude that 
	\[
		\nu_p\bra{ [wv^lu]_k} \leq \nu_p(D_1) \leq \nu_p(D)
	\]
	for all $l$ divisible by $Q_\omega$.
	
	Secondly, suppose that $p \mid k$. Then for any $\omega \in \NN_0$ there exists $Q_\omega$ such that
	\[
		(k^{\abs{v}}-1)[wv^lu]_k \equiv -[v]_k k^{\abs{u}} + (k^{\abs{v}}-1)[u]_k \equiv \pm D_0 \pmod{p^\omega}
	\]
	for all $l \in \NN$ with $l \geq Q_\omega$ (which in particular holds if $Q_\omega \mid l$). Letting $\omega > \nu_p(D_0)$ we conclude that 
	\[
		\nu_p\bra{ [wv^lu]_k} \leq \nu_p(D_0) \leq \nu_p(D)
	\]
	for all $l$ divisible by $Q_\omega$.
\end{proof}

To simplify notation in the following result, for $n,m \in \NN$ let $\gcd(m^\infty,n)$ denote the limit $\lim_{\alpha \to \infty} \gcd(m^\alpha,n)$, or equivalently the product $\prod_{p \mid \gcd(m,n)} p^{\nu_p(n)}$. Note we do not attribute any independent meaning to the symbol $n^\infty$ outside of $\gcd$. It follows directly from Lemma \ref{lem:small-gcd-in-ggp} that, with notation therein, for each $m \in \NN$ there exists an integer $Q$ such that for any $l \in \NN$ divisible by $Q$ we have $\gcd\bra{m^\infty,[wv^lu]_k} \mid D$.

\begin{proposition}\label{cor:small-gcd-in-ggp}
	Let $u,v,w \in \Sigma_k^*$, $v \neq \epsilon$, and $[u]_k,[w]_k \neq 0$ or $[v]_k \neq 0$. Then there exists $l \in \NN_0 $ such that $[wv^lu]_k \not\in Z$.	
\end{proposition}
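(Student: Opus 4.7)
I approach this by contradiction, assuming $n(l) := [wv^lu]_k \in Z$ for every $l \in \NN_0$. The hypothesis on $(u,v,w)$ guarantees that $n(l) = Mk^{l|v|} + N$ with $M \in \QQ \setminus \{0\}$ and $N \in \QQ$, so $n(l) \to \infty$ as $l \to \infty$. My plan is to locate a bad prime $p$ (with $a(p^\alpha) = 0$ for some $\alpha \in \NN$, furnished by the sparse hypothesis \eqref{eq:ass-sparse}) and an index $l$ satisfying $\nu_p(n(l)) = \alpha$ exactly; multiplicativity then yields $a(n(l)) = a(p^\alpha) \cdot a(n(l)/p^\alpha) = 0$, contradicting $n(l) \in Z$.

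By the sparse assumption, the set $B$ of bad primes is infinite. I also invoke the set $P_n := \{p \text{ prime} : p \mid n(l) \text{ for some } l \in \NN_0\}$, which is infinite by standard arguments on primes dividing values of the affine-exponential sequence $n(l) = Mk^{l|v|} + N$. In fact, for $p$ sufficiently large, $p \in P_n$ is equivalent to $-N/M \in \langle k^{|v|}\rangle \pmod p$, and by a Chebotarev-type density argument this condition holds for a positive proportion of primes. I would then select $p \in B \cap P_n$ that is additionally generic, in the sense that $p \nmid MNkD(k^{|v|}-1)$ and $p$ does not satisfy the Wieferich-style coincidence $k^{|v|\cdot\operatorname{ord}_p(k^{|v|})} \equiv 1 \pmod{p^2}$.

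For such $p$, a Hensel-style lifting argument applied to the congruence $k^{l|v|} \equiv -N/M \pmod{p^\beta}$ shows that $\{l : p^\beta \mid n(l)\}$ forms a nested family of arithmetic progressions with common differences growing by a factor of $p$ at each step. Consequently $l \mapsto \nu_p(n(l))$ realises every positive integer value as $l$ varies, so in particular there is some $l$ with $\nu_p(n(l)) = \alpha$, completing the contradiction.

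The main obstacle is the combinatorial step of guaranteeing $B \cap P_n \neq \emptyset$: both sets are infinite, but the sparse hypothesis yields only the cardinality of $B$, while $P_n$ has positive but typically sub-full density, so \emph{a priori} the intersection might be empty. I expect that resolving this requires either exploiting the $k$-automaticity of $a$ to impose further arithmetic structure on $B$, or applying Lemma \ref{lem:small-gcd-in-ggp} with $m$ a product of many bad primes together with a pigeonhole argument over the residues of $l \bmod Q_\omega$, thereby realising the required $\nu_p$-value without explicitly intersecting $B$ with $P_n$.
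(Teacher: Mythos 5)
The gap you flag at the end --- that $B \cap P_n$ might be empty --- is genuine and fatal, and your suggested fixes do not resolve it. The sparse hypothesis \eqref{eq:ass-sparse} gives only infinitude of the set $B$ of ``bad'' primes, while $P_n$ is a set of primes of positive but sub-full density; two such sets can be disjoint, and nothing in the hypotheses constrains $B$ to meet $P_n$. Your first suggested repair (exploiting $k$-automaticity to control $B$) is speculative and not something the paper establishes. Your second suggested repair misreads Lemma~\ref{lem:small-gcd-in-ggp}: that lemma provides an \emph{upper} bound on $\nu_p(n(l))$ along a progression of indices $l$, which works \emph{against} your goal of producing $l$ with $\nu_p(n(l))$ equal to a prescribed value $\alpha$. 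There is also a secondary issue: even for $p \in B \cap P_n$ your Hensel-lifting step requires $p \nmid k$ and a non-Wieferich condition, and when $p \mid k$ the valuation $\nu_p(n(l))$ is bounded, so the desired equality $\nu_p(n(l)) = \alpha$ need not be attainable.

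The paper's argument takes an entirely different route and is the reason the aridity machinery was developed earlier in the section. Rather than hunting for a single bad prime dividing some $n(l)$, one assumes $n(l) \in Z$ for all $l$ and then uses Lemma~\ref{lem:small-gcd-in-ggp} to show that for a long sequence $l_1, \dots, l_t$ chosen along nested infinite index sets, the values $n(l_1), \dots, n(l_t)$ are pairwise coprime up to bounded gcd factors $d_i \mid D$. Multiplicativity then forces $\prod_{i=1}^t n(l_i)/d_i \in Z$. Expanding this product as $\sum_{I \subset [t]} x_I k^{\alpha_I}$ with $\alpha_I = \sum_{i \in I} l_i$ produces a restricted generalised geometric progression of rank $2^t - 1$ inside $Z$, which by Lemma~\ref{lem:rank-strong} (via Lemma~\ref{lem:base-k-exp-of-rggp}) has rank exactly $2^t - 1$. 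Taking $t$ large contradicts $\rank Z < \infty$, which holds because $Z$ is arid. The crucial structural input is the finite rank of $Z$, which your proposal never uses; without it, the sparse hypothesis alone is too weak to rule out the progression $n(l)$ lying entirely in $Z$.
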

\begin{proof}
	For the sake of contradiction suppose that $[wv^lu]_k \in Z$ for all $l \in \NN_0$. 
	Replacing $w$ and $u$ with $wv$ and $vu$, we may assume that $[u]_k,[w]_k \neq 0$.
Let $t \in \NN$ be a large parameter. Our strategy is to show that the elements of $Z$ which can be constructed taking products of $t$ terms of the form $[wv^lu]_k$ ($l \in \NN_0$) give rise to an arid set of rank $\geq 2^t -1$, which leads to contradiction since $\rank Z < \infty$.
	
	For $l \in \NN_0$ let $n(l) := [wu^lv]_k$. It follows from Lemma \ref{lem:small-gcd-in-ggp} that there exists $D \in \NN$ such that for any $m \in \NN$ there exist $Q \in \NN$ such that if $l \in \NN$ and $Q \mid l$ then $ \gcd(m^\infty, n(l)) \mid D$. Using this observation iteratively we can find a sequence of infinite sets $F_1$, $F_2(l_1)$, $F_3(l_1,l_2), \dots, F_t(l_1,\dots,l_t) \subset \NN_0$ such that for any sequence $l_1,\dots,l_t \in \NN_0$ with $l_i \in F_i(l_1,\dots,l_{i-1})$ for all $i \in [t]$ we have $d_i := \gcd\bra{\prod_{j=1}^{i-1} n(l_j)^\infty,n(l_i)} \mid D$ for all $i \in [t]$. Using partition regularity, we may further assume that $1 = d_1, d_2, \dots, d_t$ are independent of the choice of $l_1,\dots,l_t$. Hence, for any admissible $l_1,\dots,l_t \in \NN_0$ we have $d_i \mid n(l_i)$, $n(l_i)/d_i \perp d_i$ and $n(l_i)/d_i \perp n(l_j)/d_j$ for all $i,j \in [t]$ with $i \neq j$. Hence, $Z$ contains the set
	\[
		A := \set{\prod_{i=1}^{t} n(l_i)/d_i }{l_1 \in F_1, \dots, l_t \in F(l_1,\dots,l_{t-1})}.
	\]   

	It is clear from the definition of $n(l)$ that there exists $z,y \in \QQ \setminus \{0\}$ and $c \in \NN$ such that $n(l) = z k^{cl} + y$. Hence, for any admissible $l_1,\dots,l_t$ we can expand the product in the definition of $A$ as
\[
	\prod_{i=1}^{t} n(l_i)/d_i = \sum_{I \subset [t]} x_I k^{\alpha_I},
\]	
where $x_I = x^{\abs{I}} y^{t-\abs{I}} \neq 0$ and $\alpha_I = \sum_{i \in I} l_i$ (in particular, $\alpha_{\epsilon} = 0$). Let $s := 2^t-1$. We may identify $\{0,1\}^t$ with $\{0,1,\dots,s\}$ in a standard way. Replacing $F_i$ with smaller sets if necessary we may assume that the sequence $\{\alpha_j\}_{j=0}^{s}$ is increasing for any admissible $l_1,\dots,l_t$, and indeed that $\alpha_j > \alpha_{j-1} + C$ for all $j \in [s]$ where $C > 0$ is an arbitrary constant. In particular, letting $C$ and $B$ be the constants from Lemma \ref{lem:base-k-exp-of-rggp} we conclude that there exists words $u_0 \in \Sigma_k^B$, $u_1, \dots, u_s \in \Sigma_k^{3B}$ and $v_1,\dots,v_s \in \Sigma_k^B$ obeying the nondegeneracy conditions $[u_s] \neq 0$ and $\#\{v_j,u_j^3, v_{j-1}\} > 1$ for all $j \in [s]$, such that 
\[
	\prod_{i=1}^{t} n(l_i)/d_i = x_0 + \sum_{j=1}^s x_j k^{\alpha_j} = [u_s v_{s}^{m_s} u_{s-1} \dots u_1 v_1^{m_1} u_0]_k,  
\]
where $m_1,\dots, m_s \in \NN$ obey the asymptotic relation $m_j = (\alpha_j - \alpha_{j-1})/B + O(1)$ for all $j \in [s]$. Note that we can assume that $u_j,v_j$ are independent of $l_1,\dots,l_t$ by partition regularity. By the same token, we may also assume that $\bar m_j := m_j \bmod M$ ($j \in [s]$) are independent of $l_1,\dots,l_t$, where $M$ is a multiplicatively rich constant such that $\delta_v^M$ is idempotent for each $v \in \Sigma_k^*$ (we can take $M = \#{S}!$). If now follows that $Z$ contains the arid set
\[
	\set{[u_s v_{s}^{m_s'} u_{s-1} \dots u_1 v_1^{m_1'} u_0]_k}{ m_j' \in \bar{m}_j + M \NN \text{ for all } j \in [s]},
\]
whose rank is equal to $s$. In particular, $\rank Z \geq s$, as needed.
\end{proof}

\begin{corollary}
	The set $Z$ is a finite union of geometric progressions of the form $\set{ x k^{cl} }{ l \in \NN_0}$ where $x \in \NN_0$ and $c \in \NN_0s$.
\end{corollary}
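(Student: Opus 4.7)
The plan is to combine Proposition \ref{cor:small-gcd-in-ggp} with the previously established fact that $Z$ is arid. Since $Z$ is a finite union of basic arid sets, it suffices to show that each basic arid set
\[
	B = \{[u_r v_r^{l_r} u_{r-1} \cdots u_1 v_1^{l_1} u_0]_k : l_1, \dots, l_r \in \NN_0\} \subseteq Z,
\]
assuming $v_i \neq \epsilon$ for every $i$ without loss of generality, has the required form.

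For each $i \in [r]$, fix any values of the $l_j$ with $j \neq i$ and consider the one-parameter subfamily $\{[w v_i^{l_i} u]_k : l_i \in \NN_0\} \subseteq Z$, where $w = u_r v_r^{l_r} \cdots u_i$ and $u = u_{i-1} v_{i-1}^{l_{i-1}} \cdots u_0$. The contrapositive of Proposition \ref{cor:small-gcd-in-ggp}, together with $v_i \neq \epsilon$, forces $[v_i]_k = 0$ (so $v_i$ is a string of zeros) and additionally $[u]_k = 0$ or $[w]_k = 0$. Once every $v_i$ is a zero-string, both $[u]_k$ and $[w]_k$ depend only on which of the blocks $u_0, \dots, u_r$ are themselves zero-strings, independently of the $l_j$'s. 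Setting $I = \{j : u_j \neq 0^{|u_j|}\}$, the condition becomes: for every $i \in [r]$, either $I \cap \{0, \dots, i-1\} = \emptyset$ or $I \cap \{i, \dots, r\} = \emptyset$. Taking $i$ strictly between two elements of $I$ yields a contradiction, so $|I| \leq 1$.

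This structural constraint now trivializes $B$. If $I = \emptyset$ or $I = \{0\}$, then $B$ reduces to a single integer. If $I = \{j\}$ with $j \geq 1$, writing $c_i = |v_i|$, $d = \sum_{i<j} |u_i|$, and $x_0 = [u_j]_k k^d$, a direct computation gives
\[
	B = \{x_0 k^{m} : m \in c_1 \NN_0 + \cdots + c_j \NN_0\}.
\]
By the Sylvester--Frobenius theorem the numerical semigroup $c_1 \NN_0 + \cdots + c_j \NN_0$ is cofinite in $d_* \NN_0$ for $d_* = \gcd(c_1, \dots, c_j)$, so $B$ is the union of finitely many singletons and one geometric progression $\{x_0 k^N \cdot k^{d_* l} : l \in \NN_0\}$ for a suitable $N$, each of which is a (possibly degenerate) geometric progression of the required form. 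The main idea is the structural constraint $|I| \leq 1$; the rest is routine bookkeeping.
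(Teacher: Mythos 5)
Your proof is correct and follows the same basic strategy as the paper's (one-sentence) argument --- applying Proposition~\ref{cor:small-gcd-in-ggp} to each one-parameter slice of a basic arid set contained in $Z$ --- but it is considerably more careful, and in fact fills a gap the paper silently steps over. The paper simply asserts that the non-forbidden basic arid sets ``take the form $\{[w0^{cl}]_k : l \in \NN_0\}$,'' but for a basic arid set of rank $r \geq 2$ with all $v_i$ zero-strings and a single nonzero block $u_j$ ($j\geq 2$), the exponents form the numerical semigroup $c_1\NN_0 + \cdots + c_j\NN_0$ rather than a single arithmetic progression $c\NN_0$; your appeal to Sylvester--Frobenius is exactly what is needed to decompose this into finitely many singletons plus one genuine geometric progression. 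The intermediate step where you encode the conclusion of the contrapositive as the condition on $I = \{j : u_j \neq 0^{|u_j|}\}$ and deduce $|I| \leq 1$ by taking $i$ between two elements of $I$ is also a cleaner articulation of what the paper leaves to the reader. One small point your analysis makes visible: the corollary's statement quantifies $c \in \NN$, while the paper's proof allows $c \in \NN_0$; nonzero singletons (the degenerate case $c = 0$, or equivalently $I = \{0\}$ or the finitely many gaps of the numerical semigroup) genuinely occur, so the statement should be read as permitting degenerate progressions, consistent with the phrasing of Proposition~\ref{prop:sparse}.
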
	
\begin{proof}
	It is enough to notice that the only basic arid sets not containing patterns forbidden by Proposition \ref{cor:small-gcd-in-ggp} take the form $\set{[w0^{cl}]}{ l \in \NN_0}$ with $c \in \NN_0$.
\end{proof}

\section{Dense case} \label{sec:Dense}

We now assume that there exists a threshold $p_0$ such that the following holds:
\begin{equation}\label{eq:ass-dense}\tag{$\ddagger$}
	\text{ For all primes } p \geq p_0 \text{ and all } \alpha \in \NN \text{ we have } a(p^\alpha) \neq  0.
\end{equation}
Our main aim is to show that $a(n)$ coincides with a Dirichlet character for $n$ devoid of small prime factors. We also record some observations concerning the behaviour of $a$ on small primes.

\subsection{Large primes}
We first deal with large primes. From this point onwards, we let $m$, $\chi$ and $p_1$ denote the objects in the following result; we may assume that $p_1 > m$ and that $p_1 > k$.

\begin{proposition}\label{prop:dense-large}
	There exists a Dirichlet character $\chi$ with modulus $m$ and a threshold $p_1$ such that $a(n) = \chi(n)$ for all $n \in \NN$ which are products of primes $\geq p_1$.  
\end{proposition}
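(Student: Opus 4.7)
My plan is to argue in three stages. First I would establish that $a(p^\alpha)$ is a root of unity for all primes $p \geq p_1$ and $\alpha \geq 1$. Since $a$ is automatic, the image $V := \{a(n) : n \in \NN\}$ is finite, and the dense assumption \eqref{eq:ass-dense} places each $a(p^\alpha)$ in $V \setminus \{0\}$ for $p \geq p_0$. If some $v \in V$ with $|v| \neq 1$ appeared as $a(p^\alpha)$ for infinitely many distinct primes $p$, then taking products over $N$ of them via multiplicativity across coprime supports would give $a$-values of size $|v|^N$, which escape the finite set $V$. Hence only finitely many primes can contribute such $v$, and after enlarging $p_1$ every $a(p^\alpha)$ with $p \geq p_1$ lies on the unit circle. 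A parallel pigeonhole argument, using that a unit-modulus element of infinite order cannot have all its powers lying in a finite set, rules out non-torsion values and forces $a(p^\alpha)$ to be a root of unity.

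Second, I would extract a modulus $m$ such that $p \mapsto a(p)$ depends only on $p \bmod m$ for $p \geq p_1$. The value $a(p)$ is determined by the trajectory of $\cA$ on the base-$k$ digits of $p$, and for each residue class $r$ mod $k^N$ coprime to $k$, Dirichlet's theorem on primes in arithmetic progressions guarantees infinitely many primes $p \equiv r \pmod{k^N}$. A pigeonhole argument on the finite state set, coupled with the multiplicative identity $a(p)a(q) = a(pq)$ applied to pairs of primes in different residue classes, propagates consistency across the fibers of the automaton and pins down the dependence of $a(p)$ to a modulus $m$ (a divisor of a suitable $k^N$, possibly multiplied by a $k$-coprime factor).

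Third, writing $\chi(p) := a(p)$ for primes $p \geq p_1$ and extending by $m$-periodicity to residue classes coprime to $m$, multiplicativity $\chi(p)\chi(q) = a(pq)$ for distinct large primes $p,q$ forces $\chi$ to be multiplicative on $(\ZZ/m\ZZ)^\times$, hence a Dirichlet character modulo $m$. To extend to prime powers, the identity $a(p^\alpha) a(q) = a(p^\alpha q)$ with an auxiliary prime $q \neq p$ of appropriate residue identifies $a(p^\alpha)$ with $\chi(p^\alpha q)/\chi(q) = \chi(p)^\alpha$, completing the character structure. The main obstacle will be Stage 2 --- extracting a multiplicative modulus $m$ from the base-$k$ automatic structure. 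Automatic sequences restricted to primes can in general behave wildly, and this step relies essentially on both the multiplicative constraint and Dirichlet's theorem to convert the finite-state description into genuine periodicity mod $m$; this forms the technical heart of the argument.
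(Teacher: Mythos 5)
There is a genuine gap, and it sits exactly where you flag the ``technical heart.'' Stage 2 as written is not an argument: ``pigeonhole on the finite state set, coupled with $a(p)a(q)=a(pq)$ applied to pairs of primes in different residue classes, propagates consistency'' does not explain how one forces two large primes $p\equiv p'\pmod m$ that drive the automaton into \emph{different} states to nonetheless satisfy $a(p)=a(p')$. Multiplying by a common auxiliary prime $q$ gives $a(p)a(q)=a(pq)$ and $a(p')a(q)=a(p'q)$, but there is no control on the states reached by the digit strings of $pq$ versus $p'q$, so no cancellation follows. The paper's route is structurally different and avoids ever deriving periodicity of $a$ on primes directly from the automaton: it first proves (Proposition~\ref{prop:dense-large-totmult}) that $a(p^\alpha)=a(p)^\alpha$ for all large primes, using a density argument built on the automaton-equivalence relation and Lemma~\ref{lem:equiv-pairs}; it then replaces $a$ by a completely multiplicative $\tilde a$, uses finiteness of the $k$-kernel to produce $\beta<\gamma$ with $\tilde a(k^\beta Qn+1)=\tilde a(k^\gamma Qn+1)$, and invokes the Elliott--Kish characterization theorem to conclude $\tilde a$ is a Dirichlet character. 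The finite-state-to-periodicity passage that you hope to do by pigeonhole is precisely what gets delegated to that deep input.

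Stage 3 is also circular as stated. To get $a(p^\alpha)=\chi(p)^\alpha$ you write $a(p^\alpha)=a(p^\alpha q)/a(q)=\chi(p^\alpha q)/\chi(q)$, but the middle equality $a(p^\alpha q)=\chi(p^\alpha q)$ is an instance of the conclusion of the proposition for the composite $p^\alpha q$ and has not been established---at that point you only know $a=\chi$ on primes, not on products involving genuine prime powers. Establishing $a(p^\alpha)=a(p)^\alpha$ for large $p$ must come \emph{before} any identification with $\chi$, which is exactly the role of Proposition~\ref{prop:dense-large-totmult} in the paper, and your sketch omits an independent argument for it. Stage 1, by contrast, is essentially sound: $a$ has finite image, so $a(p)$ is forced to be a root of unity for all but finitely many primes $p$, and the paper uses this same observation.
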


Relying on the following result, we can prove Proposition \ref{prop:dense-large} by essentially the same methods which were used by Klurman and Kurlberg \cite{KlurmanKurlberg-2019} for completely multiplicative sequences.

\begin{proposition}\label{prop:dense-large-totmult}
	There exists a threshold $p_2$ such that if $p \geq p_2$ is a prime then $a(p^\alpha) = a(p)^\alpha \neq 0$ for all $\alpha \in \NN$.
\end{proposition}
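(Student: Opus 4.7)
The plan is to prove the statement by establishing, for every sufficiently large prime $p$ coprime to $k$, that the $k$-automatic sequence
\[
g_p(n) := a(pn) - a(p) a(n)
\]
vanishes identically. By multiplicativity $g_p(n) = 0$ for $p \nmid n$, and a direct computation gives $g_p(p^\beta m) = D_\beta \cdot a(m)$ for $p \nmid m$, where $D_\beta := a(p^{\beta+1}) - a(p) a(p^\beta)$. Since by \eqref{eq:ass-dense} each $a(p^\beta)$ is nonzero, the identity $g_p \equiv 0$ is equivalent to $D_\beta = 0$ for every $\beta \geq 0$, which by a trivial induction yields the desired formula $a(p^\alpha) = a(p)^\alpha$.

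My first substantive step is to extract an a priori eventual periodicity. For $p$ coprime to $k$, the map $\alpha \mapsto \delta_{(p^\alpha)_k}(s_0) \in S$ is eventually periodic by finiteness of $S$, so $a(p^\alpha) = \tau(\delta_{(p^\alpha)_k}(s_0))$ is eventually periodic in $\alpha$; combined with \eqref{eq:ass-dense} the values $a(p^\alpha)$ lie in a finite subset of $\CC^*$, and $(D_\beta)_\beta$ is eventually periodic and attains only finitely many distinct values. To then rule out $D_\beta \neq 0$, I would study the normalised ratio $h_p(n) := g_p(n)/a(n)$, well defined on $\supp(a)$; by the computation above it depends only on $\nu_p(n)$, taking the value $D_{\nu_p(n)}/a(p^{\nu_p(n)})$, and in particular it vanishes on $\{n : p \nmid n\} \cap \supp(a)$, so $h_p(1) = 0$. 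It would therefore suffice to show that $h_p$ is constant on $\supp(a)$. I would attempt this by combining the $k$-automaticity of $g_p$, the finite $k$-kernel of $a$, and the positive density of $\supp(a)$ forced by \eqref{eq:ass-dense}, to argue that only boundedly many slices $\{n : p^\beta \parallel n\} \cap \supp(a)$ can be genuinely separated by values of a $k$-automatic function derived from $a$, and that in the dense regime this forces a complete collapse. A uniform threshold $p_2$ above which the argument works then follows from the bounds involved being controlled by the finite $k$-kernel of $a$ together with $p_0$ and $k$.

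The main obstacle is precisely the collapsing step, because mere $k$-automaticity of $g_p$ does not a priori forbid dependence on the $p$-adic valuation: the sets $\{n : p^\beta \parallel n\}$ are themselves $k$-recognisable for each fixed $\beta$, so an automatic function can in principle read off $\nu_p(n)$ up to any bounded level. What has to be exploited is the interplay between this reading and the multiplicative structure of $a$: different slices carry the same sequence $m \mapsto a(m)$ up to a scalar $D_\beta/a(p^\beta)$, and for $p$ large all these scalars must be compatible with a single finite automaton producing a multiplicative function of finite image. I expect the cleanest execution of this step to borrow from the analytic tools used by Klurman and Kurlberg for completely multiplicative automatic sequences --- in particular, Halász-type estimates controlling mean values of multiplicative functions --- possibly combined with an adaptation of the arid-set techniques of Section~\ref{sec:Sparse} to the dense regime.
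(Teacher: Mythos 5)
You correctly identify what needs to be shown --- that $a(pn)=a(p)a(n)$ for enough $n$, or equivalently that $D_\beta := a(p^{\beta+1})-a(p)a(p^\beta)$ vanishes for all $\beta$ --- and the observation that $\alpha\mapsto a(p^\alpha)$ is eventually periodic is true and easy. But your proposal has a genuine gap at precisely the point you flag as ``the main obstacle.'' Everything in your sketch up to that point is a reformulation: saying that $h_p$ must be constant on $\supp(a)$ is exactly the statement to be proved, and you offer no mechanism that forces the collapse. As you yourself note, $k$-automaticity of $g_p$ is \emph{not} an obstruction to reading off $\nu_p(n)$ up to a bounded level, so the finiteness of the $k$-kernel alone cannot rule out slice-dependence. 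The appeal to Hal\'asz-type estimates is a guess at a toolkit, not an argument, and in fact the paper's proof does not use anything of that flavour.

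The paper's argument is elementary and combinatorial, and runs along a different axis. One puts an equivalence relation on $\Sigma_k^*$ by $u\sim v$ iff $|u|=|v|$ and $\delta_u=\delta_v$ (same transition map in the automaton), extends it to $\NN_0$, and notes that there are only finitely many classes. Pigeonhole (Lemma~\ref{lem:equiv-pairs}) then produces, for all large $p$, a pair $n_1\not\equiv n_2\pmod p$ with $n_1\sim n_2$ and $pn_1\sim pn_2$. For almost every $n$ the base-$k$ expansion contains the window $u_1=(n_1)_k^l$, so one writes $(n)_k = x_n\,u_1\,y_n$ and, correspondingly, $(pn)_k = x_n'\,u_1'\,y_n'$ with $u_1'=(pn_1)_k^l$. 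When $p\mid n$, swapping $u_1\leftrightarrow u_2$ and $u_1'\leftrightarrow u_2'$ does not change the automaton output (since $\delta_{u_1}=\delta_{u_2}$ and $\delta_{u_1'}=\delta_{u_2'}$), yet it shifts $n$ off the multiples of $p$ while preserving the relation $[x_n'u_2'y_n']_k = p\,[x_n u_2 y_n]_k$; multiplicativity then applies to the swapped number and one unwinds to get $a(pn)=a(p)a(n)$. Specialising to $n$ with $p^\alpha\parallel n$ and $n$ coprime to all small primes gives $a(p^{\alpha+1})=a(p)a(p^\alpha)$ and the induction closes. This ``digit-surgery via state-equivalent words'' is the missing idea in your sketch: it converts the arithmetic condition $p\mid n$ into a harmless local modification of the base-$k$ expansion that the automaton cannot detect, and it is what makes the dense case tractable without any analytic machinery.
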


A crucial step in the argument is a theorem due to Elliott and Kish. 
\begin{theorem}{{\cite[Thm.\ 2]{ElliotKish-2017}}}\label{thm:EK}
Let $A,B,C,D$ be integers with $A,C > 0$ and $\Delta = AD-BC \neq 0$, and let $b \colon \NN_0 \to \CC$ be a completely multiplicative sequence. Put $M = 6 \gcd(A,C) A^2 C^2 \Delta^3$. Suppose that there exists $z \in \CC \setminus \{0\}$ such that that
\[
	b\bra{\frac{An+B}{Cn+D}} = z \text{ for all sufficiently large } n \in \NN_0.
\]
Then there exists a Dirichlet character $\chi$ modulo $M$ such that $b(n) = \chi(n)$ for all $n \in \NN_0$ coprime to $M$. 
\end{theorem}
 We will also need the fact that the $k$-{kernel} of any $k$-automatic sequence is finite. Here, the $k$-\emph{kernel} of $a$ is the set of all the sequences $n \mapsto a(k^\alpha n + r)$ with $\alpha \in \NN_0$ and $0 \leq r < k^\alpha$. 

\begin{proof}[Proof of Prop.\ \ref{prop:dense-large} assuming Prop.\ \ref{prop:dense-large-totmult}]
	Suppose for the sake of clarity that $p_2 \geq p_0$. Because the $k$-kernel of $a$ is finite, we can find integers $\beta < \gamma$ such that $a(k^\beta n + 1) = a(k^\gamma n + 1)$ for all $n \in \NN$.
	 Let $\tilde a$ denote the multiplicative function given by $\tilde a(p^\alpha) = a(p^\alpha) = a(p)^\alpha$ if $p \geq p_2$ and $\tilde a(p^\alpha) = 1$ if $p < p_2$, $\alpha \in \NN$. Clearly, $\tilde a$ is totally multiplicative. Because $a$ takes on only finitely many values, $a(p)$ is a root of unity for each $p \geq p_2$. Letting $Q$ be the product of all primes $< p_2$ we obtain 
	\[
		\tilde a(k^\beta Q n + 1) = a(k^\beta Q n + 1) = a(k^\gamma Q n + 1) = \tilde a(k^\gamma Q n + 1) \neq 0
	\]
	for all $n \in \NN_0$. It now follows from Theorem \ref{thm:EK} that $\tilde a$ coincides with a Dirichlet character on all powers of large primes.
\end{proof}

In order to prove Proposition \ref{prop:dense-large-totmult}, it will be convenient to introduce an equivalence relation on $\Sigma_k^*$ where $u \sim v$ if $\abs{u} = \abs{v}$ and words $u$, $v$ give rise to the same transition function in the automaton $\cA$, that is, $\delta_u = \delta_v$. Since transition functions are self-maps of $S$, the number of equivalence classes $\#\bra{ \Sigma_k^l/{\sim}} $ is bounded uniformly with respect to $l \in \NN$. Likewise, consider the equivalence relation on $\NN_0$ given by $n_1 \sim n_2$ if $(n_1)_k^l \sim (n_2)_k^l$ for all sufficiently large $l \in \NN$, or --- equivalently --- if $(n_1)_k^l \sim (n_2)_k^l$ for at least one $l \in \NN$ with $n_1, n_2 < k^l$. Crucially, there are only finitely many equivalence classes: $\#\bra{ \NN_0/{\sim}} < \infty$.

\begin{lemma}\label{lem:equiv-pairs}
	There exists a threshold $p_3$ such that for any $p > p_3$ there exists a pair $n_1,n_2 \in \NN_0$ with $n_1 \not \equiv n_2 \pmod{p}$ such that $n_1 \sim n_2$ and $pn_1 \sim pn_2$.
\end{lemma}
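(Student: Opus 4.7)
The plan is to prove this lemma by a clean pigeonhole argument, using the finiteness of the equivalence relation $\sim$ on $\NN_0$ that the paper has just emphasised. Let $N := \#(\NN_0/{\sim})$, which is finite (bounded, e.g., by $\abs{S}^{\abs{S}}$, since every equivalence class is determined by a transition function $S \to S$ obtained as $\delta_{(n)_k^l}$ for $l$ large). The crucial point is that $N$ does not depend on $p$.

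I would then fix a prime $p$ and consider the map
\[
\phi_p \colon \{0, 1, \dots, p-1\} \to (\NN_0/{\sim}) \times (\NN_0/{\sim}), \qquad n \mapsto ([n]_\sim, [pn]_\sim).
\]
The codomain has cardinality at most $N^2$, which is an absolute constant. Setting $p_3 := N^2$, for any prime $p > p_3$ the domain of $\phi_p$ has strictly more elements than the codomain, so by the pigeonhole principle there exist distinct $n_1, n_2 \in \{0, 1, \dots, p-1\}$ with $\phi_p(n_1) = \phi_p(n_2)$. Since $n_1 \neq n_2$ and both lie in $[0, p-1]$, we automatically have $n_1 \not\equiv n_2 \pmod{p}$, and the definition of $\phi_p$ gives $n_1 \sim n_2$ and $pn_1 \sim pn_2$, which is the required conclusion.

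There is essentially no obstacle here beyond correctly identifying the relevant finite set to apply pigeonhole to. The whole argument turns on the observation that, while the integers $pn$ grow with $p$, their equivalence classes under $\sim$ live in the same finite quotient $\NN_0/{\sim}$, so the number of possible joint $\sim$-types $([n]_\sim,[pn]_\sim)$ remains bounded uniformly in $p$. Once $p$ exceeds this bound (squared), two residues mod $p$ are forced to collide under $\phi_p$, which is exactly what the lemma asserts. The reader is probably meant to take a moment to appreciate that this is why the automatic assumption eventually forces strong arithmetic regularity — namely, it lets one produce many coincidences $a(pn_1) = a(pn_2)$ despite $n_1 \not\equiv n_2 \pmod{p}$, feeding into the proof of Proposition \ref{prop:dense-large-totmult}.
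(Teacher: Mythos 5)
Your proof is correct and is precisely the argument the paper intends: the paper's own proof is a one-liner (``Since $\#(\NN_0/{\sim}) < \infty$, this follows from the pigeonhole principle''), and you have simply spelled out that pigeonhole explicitly via the map $n \mapsto ([n]_\sim, [pn]_\sim)$ on $\{0,\dots,p-1\}$ with $p_3 = N^2$.
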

\begin{proof}
	Since $\#\bra{ \NN_0/{\sim}} < \infty$, this follows from the pidgeonhole principle.
\end{proof}

For the sake of brevity, in the following argument and elsewhere we will say that a statement $\varphi(n)$ is true for \emph{almost all} $n$ if the set of $n$ for which it fails has asymptotic density $0$:
\[
	\lim_{N \to \infty} \frac{1}{N} \# \set{ n \in [0,N)} { \neg \varphi(n) } = 0.
\]

\begin{proof}[Proof of Prop.\ \ref{prop:dense-large-totmult}]
	Take any prime $p$ with $p > p_3$, where $p_3$ is the threshold in Lemma \ref{lem:equiv-pairs}, and let $n_1,n_2$ be the pair whose existence is guaranteed by said lemma. Let $l$ be a large integer, to be determined in the course of the argument, and put $u_i := (n_i)_k^l$, $u_i' := (pn_i)_k^l$. It is a well-known fact that if $w \in \Sigma_k^*$ then for almost all $n$ the expansion $(n)_k$ contains $w$. Hence, for almost all $n$ there exists a decomposition
\[
	(n)_k = x_n u_1 y_n,
\]
for some $x_n, y_n \in \Sigma_k^*$ where $x_n$ is nonempty and does not start with any zeros and $y_n$ starts with at least $l$ zeros. Letting $x'_n$ and $y'_n$ denote the expansions of $p[x_n]_k$ and $p[y_n]_k$ with $x'_n$ not starting with any zeros and $\abs{y'_n} = \abs{y_n}$, we get the decomposition
\[
	(pn)_k = x_n' u_1' y_n'.
\]
If $p \nmid n$ then clearly $a(pn) = a(p)a(n)$. On the other hand, if $p \mid n$ then
\begin{align*}
	a(pn) &= a([x_n' u_1' y_n']_k)  
	= a([x_n' u_2' y_n']_k) 
	\\& = a(p) a([x_n u_2 y_n]_k)
	= a(p) a([x_n u_1 y_n]_k) = a(p) a(n).
\end{align*}
Hence, we have shown that $a(pn) = a(p) a(n)$ for almost all $n$.

Let $\alpha \in \NN$. Integers $n$ such that $p^\alpha \parallel n$ and $n \perp q$ for all $q < p_0$ constitute a positive proportion of all integers, whence there exists many $n$ such that
\[
	a(p^{\alpha+1}) = \frac{a(pn)}{a(n/p^\alpha)} = \frac{a(p)a(n) }{ a(n/p^\alpha) } = 
	a(p)a(p^\alpha).
\]
It now follows by induction that $a(p^\alpha) = a(p)^\alpha$.	 
\end{proof}

\subsection{Small primes} 

In this section we address the behaviour of $a$ on small primes. Unfortunately, we can only obtain a weaker analogue of Proposition \ref{prop:dense-large}.

\begin{proposition}\label{prop:dense-small-prime}
	For any prime $p \nmid k$, the sequence $a(p^\alpha)$ is eventually periodic.
\end{proposition}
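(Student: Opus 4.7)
The plan is to mirror the arguments of Lemma \ref{lem:equiv-pairs} and Proposition \ref{prop:dense-large-totmult} with the prime $p$ replaced by a high power $p^T$. Setting $M := \#(\NN_0/{\sim}) < \infty$, my first goal is to produce a pair $n_1, n_2 \in \NN_0$ with $n_1 \sim n_2$, $p^T n_1 \sim p^T n_2$, and $n_1 \not\equiv n_2 \pmod{p}$, for some suitable $T$. The map $n \mapsto ([n]_\sim, [p^T n]_\sim)$ takes at most $M^2$ values, so pigeonhole on any set $A \subset \NN_0$ of size greater than $M^2$ produces two elements sharing the same image. To also arrange the incongruence modulo $p$, I would choose $A$ to span many residue classes modulo $p$; this succeeds provided that some fiber of the map contains elements from two distinct residue classes mod $p$, which is automatic unless the map refines mod-$p$ residues. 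The latter scenario forces $M \geq p$, restricting the issue to finitely many small primes.

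With such a pair in hand, I would run the proof of Proposition \ref{prop:dense-large-totmult} with $p$ replaced by $p^T$, enlarging the buffer of zeros required at the start of $y_n$ to exceed $T\log_k p$ so that multiplication by $p^T$ creates no carry across $u_1 := (n_1)_k^l$. This would yield $a(p^T n) = a(p^T) a(n)$ for almost all $n$. Applying this identity to $n = p^\alpha m$ for $m \perp p$ with $a(m) \neq 0$ (such $m$ have positive density by \eqref{eq:ass-dense}: any product of primes $\geq p_0$ coprime to $p$ qualifies) gives $a(p^{\alpha + T}) = a(p^T) a(p^\alpha)$ for every $\alpha \geq 0$. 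Writing $\alpha = qT + r$ with $0 \leq r < T$ then gives $a(p^\alpha) = a(p^T)^q a(p^r)$; since $a$ has finite range, $a(p^T)$ must be either zero or a root of unity, and in either case $a(p^\alpha)$ is eventually periodic in $\alpha$.

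The main obstacle is the small-prime regime $p \leq M$, where the map $n \mapsto ([n]_\sim, [p^T n]_\sim)$ could refine mod-$p$ residues, in which case the pigeonhole does not directly produce a pair with $n_1 \not\equiv n_2 \pmod p$. A naive pigeonhole on powers of $p$ only yields $n_i = p^{\alpha_i}$, giving $\nu_p(n_2 - n_1) = \alpha_1$ and hence only $n_1 \not\equiv n_2 \pmod{p^T}$ for $T > \alpha_1$, which is too weak to kick $n$ out of divisibility by $p$ in the substitution step. For the finitely many offending primes one should therefore argue separately: the hypothesis that $\sim$ refines mod $p$ forces $[n]_\sim$ to determine $n \bmod p$, and in particular restricts the $\sim$-classes of $p^\alpha$ for $\alpha \geq 1$ to a finite family, from which a pigeonhole argument combined with multiplicativity should extract eventual periodicity directly.
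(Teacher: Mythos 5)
Your argument reproduces Proposition \ref{prop:dense-large-totmult} with $p$ replaced by a power $p^T$, and for primes above a threshold it is correct --- but then it is essentially a restatement of that proposition, which already gives $a(p^\alpha)=a(p)^\alpha$ for $p\geq p_2$. The content of Proposition \ref{prop:dense-small-prime} lies precisely in the small primes $p<p_2$ with $p\nmid k$, and that is where your proof has a genuine gap. The pigeonhole on $n\mapsto([n]_\sim,[p^T n]_\sim)$ does produce a collision $n_1\sim n_2$ with $p^Tn_1\sim p^T n_2$, but the digit-swap step needs $p\nmid n_1-n_2$ in order to conclude $p\nmid[x_n u_2 y_n]_k$, and once $p\leq\#\bra{\NN_0/{\sim}}^2$ there is no way to force this: if the automaton's state records $n\bmod p$ (as happens, for instance, when $a$ agrees with a Dirichlet character of conductor divisible by $p$), then $n_1\sim n_2$ already implies $n_1\equiv n_2\pmod p$, and increasing $T$ is of no help since the pigeonhole bound depends only on $\#\bra{\NN_0/{\sim}}$, not on $T$. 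Your fallback sketch for these finitely many primes does not close the gap: that the $\sim$-classes of $\{p^\alpha\}_{\alpha\geq1}$ form a finite family is trivially true, a collision $p^{\alpha_1}\sim p^{\alpha_2}$ only gives $a(p^{\alpha_1})=a(p^{\alpha_2})$ and says nothing about intermediate exponents, and no reason is offered why $\alpha\mapsto[p^\alpha]_\sim$ should itself be eventually periodic.

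The paper's proof avoids any threshold on $p$ by using a different mechanism. Rather than seeking $p\nmid n_1-n_2$, it iterates the closure property ($n_1\sim n_2$, $n_1'\sim n_2'$ imply $k^\beta n_1+n_1'\sim k^\beta n_2+n_2'$ for large $\beta$) to construct one pair $n_1\sim n_2$ whose difference $d$ is divisible by \emph{every} prime $<p_1$ and by a high power of $k$, with $\delta:=\nu_p(d)$ then a fixed constant. Using primes $q$ in short intervals so that the base-$k$ expansion of $qp^\alpha$ begins with $1(n_1)_k^l$, one swaps the block to $(n_2)_k^l$, peels off exactly $p^\delta$ from $qp^\alpha+dk^\beta$, and lands on an integer coprime to all primes $<p_1$, on which $a=\chi$ by Proposition \ref{prop:dense-large}. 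This yields $a(p^\alpha)=\chi(p^{\alpha-\delta}+d/p^\delta)\,a(p^\delta)$ for $\alpha>\delta$, manifestly eventually periodic in $\alpha$. The divisibility of $d$ is thus used to make the \emph{cofactor} coprime to all small primes (so the character $\chi$ can be invoked), not to kick $n$ out of divisibility by $p$; this is the idea missing from your approach.
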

\begin{proof}
	Recall that there exists (many) pairs of distinct integers $n_1,n_2 \in \NN_0$ such that $n_1 \sim n_2$. Note also that if $n_1 \sim n_2$ and $n_1' \sim n_2'$ then also $k^\alpha n_1 + n_1' \sim k^\alpha n_2 + n_2'$ for sufficiently large $\alpha$. Hence, we can assume that $d := n_1 - n_2$ is divisible by any prime $p < p_1$ and also by a large power of $k$. Let $v_1 = (n_1)_k^l$ and $v_2 = (n_2)_k^l$ where $l$ is a large integer. 
		
	For any $\alpha \in \NN$ and any $\beta$ sufficiently large in terms of $\alpha$, there exists a prime $q$ such that $(q p^\alpha)_k \in 1v_1 \Sigma_k^\beta$, that is, the expansion of $q p^\alpha$  starts with $1v_1$ and contains $\beta$ other digits. (This follows from the classical fact for any $\e > 0$ and any sufficiently large $N$, there exists a prime between $N$ and $N+\e N$; in fact, by the Prime Number Theorem there are  $\sim \e N/\log N$ such primes.) Let $\delta = \nu_p(d)$ and suppose that $\alpha > \delta$. Then 
\begin{align*}
	a(p^\alpha) &= a(qp^\alpha)/a(q) = a(qp^\alpha + d k^\beta)/a(q)
	\\ &= a(qp^{\alpha-\delta} + d k^\beta/p^\delta)a(p^\delta)/a(q)
	= \chi(qp^{\alpha-\delta} + d k^\beta/p^\delta)a(p^\delta)/\chi(q),
\end{align*}  
where in the last transition we use the fact that any prime $< p_1$ divides exactly one of  $qp^{\alpha-\delta}$ and $(d/p^\delta) k^\beta$. We may also assume (using the Prime Number Theorem in arithmetic progressions) that $q \equiv 1 \bmod{m}$ and $d k^\beta/p^\delta \equiv d/p^\delta \bmod{m}$, whence 
\begin{align*}
	a(p^\alpha) = \chi(p^{\alpha-\delta} + d/p^\delta)a(p^\delta).
\end{align*}  
It remains to notice that the expression on the right hand side is periodic in $p^\alpha$.
\end{proof}

\begin{corollary}
	There exists a periodic sequence $b \colon \NN_0 \to \CC$ and threshold $n_0$ such that $a(n) = b(n)$ for all $n \geq n_0$ coprime to $k$.
\end{corollary}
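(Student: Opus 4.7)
The plan is to combine Propositions \ref{prop:dense-large} and \ref{prop:dense-small-prime}, after sharpening the latter to obtain a clean multiplicative formula for $a(p^\alpha)$ at small primes.

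First I would revisit the proof of Proposition \ref{prop:dense-small-prime}. The pair $(n_1, n_2)$ produced by Lemma \ref{lem:equiv-pairs} was chosen so that $d := n_1 - n_2$ is divisible by every prime below $p_1$ and by a large power of $k$. I would refine the equivalence $\sim$ by further intersecting with congruence modulo $m$ (which still has only finitely many joint classes), so that the pigeonhole argument of Lemma \ref{lem:equiv-pairs} yields a pair additionally satisfying $m \mid d$. Then in the derivation
\[
a(p^\alpha) = \chi((p^\alpha + d)/p^{\delta_p}) \cdot a(p^{\delta_p}) \quad (\alpha \geq \delta_p),
\]
the contribution $d/p^{\delta_p}$ becomes $\equiv 0 \pmod m$ for each $p \in \Pi_< := \{p < p_1 : p \nmid k\}$ coprime to $m$, and the formula collapses to
\[
a(p^\alpha) = K_p \cdot \chi(p)^\alpha \quad (\alpha \geq \delta_p),
\]
for some constant $K_p \in \CC$.

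Second, I would assemble the pieces into additive periodicity. Set $N := m \cdot k \cdot \prod_{p \in \Pi_<} p^{\delta_p + 1}$ and $n_0 := N$. For $n \geq n_0$ coprime to $k$, Proposition \ref{prop:dense-large} and multiplicativity of $a$ give $a(n) = \prod_{p \in \Pi_<} a(p^{\nu_p(n)}) \cdot \chi(n_>)$, where $n_>$ is the product of the prime powers of $n$ supported on primes $\geq p_1$. Let $S(n) \subseteq \Pi_<$ collect the primes $p$ with $\nu_p(n) < \delta_p + 1$ (for which $\nu_p(n)$ and hence $a(p^{\nu_p(n)})$ are read off from $n \bmod N$), and set $n_S := \prod_{p \in S(n)} p^{\nu_p(n)}$. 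For $p \notin S(n)$ we have $a(p^{\nu_p(n)}) = K_p \chi(p)^{\nu_p(n)}$; multiplicativity of $\chi$ then gives
\[
a(n) = \Bigl(\prod_{p \in S(n)} a(p^{\nu_p(n)})\Bigr) \cdot \Bigl(\prod_{p \notin S(n)} K_p\Bigr) \cdot \chi(n/n_S),
\]
manifestly a function of $n \bmod N$; defining $b(n)$ by this expression and extending periodically with period $N$ yields the desired sequence.

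The main obstacle will be handling primes $p \in \Pi_<$ that divide $m$, where $\chi(p) = 0$ renders the naive simplification degenerate. For such $p$ the analogous derivation, combined with the Chinese Remainder decomposition $\chi = \prod_{q \mid m} \chi_q$, yields $a(p^\alpha) = K_p \chi^{(p)}(p)^\alpha$ with $\chi^{(p)} := \prod_{q \neq p} \chi_q$ the component of $\chi$ modulo $m/p^{\nu_p(m)}$. The telescoping in the previous step then tracks each component of $\chi$ separately; this is notationally heavier but mechanically routine.
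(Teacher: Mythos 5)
Your plan diverges from the paper's route, and the divergence is where the gap lies. The paper's proof of the corollary does \emph{not} factor $a(n)$ through $a(p^{\nu_p(n)})$ and then invoke the eventual-periodicity-in-$\alpha$ statement from Proposition~\ref{prop:dense-small-prime}. Instead, it re-runs the whole argument of Proposition~\ref{prop:dense-small-prime} with a general $n$ in place of $p^\alpha$: one finds a large prime $q$ with $(qn)_k \in 1v_1\Sigma_k^\beta$, replaces $qn$ by $qn \pm dk^\beta$, and observes that the shifted integer has $p$-adic valuation \emph{exactly} $\delta_p$ for every small prime $p$ with $\nu_p(n)>\delta_p$. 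So the paper only ever evaluates $a$ at the bounded prime powers $p^{\delta_p}$, and the remaining factor is $\chi$ evaluated at an integer congruent to $(n\pm d)/\prod_p p^{\delta_p}$ mod $m$, which is plainly controlled by $n \bmod N$.

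Your version instead substitutes $a(p^{\nu_p(n)}) = K_p\,\chi^{(p)}(p)^{\nu_p(n)}$ for $p\notin S(n)$ into $a(n)=\prod_p a(p^{\nu_p(n)})\cdot\chi(n_>)$, and the resulting expression still contains the exponent $\nu_p(n)$, which is \emph{not} determined by $n\bmod N$ once $\nu_p(n) > \nu_p(N)$. For $p\perp m$ you are saved by the telescoping $\prod_{p\notin S(n)}\chi(p)^{\nu_p(n)}\cdot\chi(n_>) = \chi(n/n_S)$, and one checks $mn_S \mid N$ so that $\chi(n/n_S)$ is a legitimate function of $n\bmod N$. But for a prime $q\mid m$ with $q\notin S(n)$ this telescoping does not close: after the CRT split one is left with a factor
\[
\chi_q\!\left(\frac{n}{q^{\nu_q(n)}\,n_S}\right),
\]
and the ``unit part'' $n/q^{\nu_q(n)}\bmod q^{\nu_q(m)}$ is genuinely undetermined by $n\bmod N$ whenever $\nu_q(n)>\nu_q(N)$. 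So the assertion that the final expression is ``manifestly a function of $n\bmod N$'' is not justified, and the CRT bookkeeping you describe as ``mechanically routine'' does not in fact terminate.

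What actually rescues the situation is a hidden constraint: by comparing with the paper's formula (or by a direct argument of the same type) one sees that $K_p$ must vanish whenever the local component $\chi_p$ is nontrivial --- otherwise, taking $n = p^\alpha r$ with $r$ a large prime and varying $r$ within a fixed class mod $N$ would force $\chi_p(r)$ to be constant, which is false. With $K_p = 0$ the problematic terms disappear, but this vanishing is precisely what your decomposition does not reveal; it comes for free from the paper's ``shift by $dk^\beta$'' argument, which never exposes $a$ at a high power of a small prime. In short: your refinement $m\mid d$ and the explicit formula $a(p^\alpha)=K_p\chi^{(p)}(p)^\alpha$ are both correct and are a useful sharpening, but they do not by themselves give periodicity in $n$; you would still need to prove $K_p=0$ for primes $p$ in the conductor, which is essentially equivalent to re-running the paper's argument.
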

\begin{proof}
	Partitioning $\NN_0$ into arithmetic progressions, we may assume that for each prime $p < p_1$, either $n$ is divisible by a large power of $p$ or $n$ is not divisible by $p$. Repeating the same reasoning as in Proposition \ref{prop:dense-small-prime} we conclude that
	\begin{align*}
	a(n) = \chi\bra{ \textstyle(n + d)/\prod_{p} p^{\delta_p} } \prod_{p} a\bra{p^{\delta_p}},
\end{align*} 
where $\delta_p = \nu_p(d)$ and the product runs over all primes $p < p_1$ with $p \nmid k$ and $p \mid n$.
\end{proof}

\bibliographystyle{alphaabbr}
\bibliography{bibliography}

\end{document}